\titleformat*{\section}{\large\bfseries}
\newtheorem{theorem}{Theorem}[section]
\newtheorem{lemma}[theorem]{Lemma}
\newtheorem{corollary}[theorem]{Corollary}
\newtheorem{example}[theorem]{Example}
\newtheorem{remark}[theorem]{Remark}
\title{Quasi-Fredholm spectrum and compact perturbations}
\author{\large Anuradha Gupta and Ankit Kumar$^*$}
\date{}
\begin{document}
\maketitle
\begin{abstract}
In this paper we explore some characteristics of the quasi-Fredholm resolvent set $\rho_{qf}(T)$ of an operator $T$ defined on an infinite dimensional  Banach space $X$. Moreover, in the case of Hilbert space $H$, we study the stability of the SVEP and describe the operators for which the SVEP is preserved under compact perturbations  using quasi-Fredholm spectrum and $\rho_{qf}(T)$.

\textbf{Mathematics Subject Classification:} 47A10, 47A55, 47B15.

\textbf{Keywords:} Quasi-Fredholm spectrum, topological uniform descent, SVEP, compact perturbation.
\end{abstract}
 \section{Introduction and Preliminaries}
 Throughout this paper, denote by $B(X)$ the Banach algebra of all bounded linear operators defined on an infinite dimensional complex  Banach space $X$. For $A \subset \mathbb{C},$ $\mbox{iso} \thinspace A$, $\mbox{int} \thinspace A$,  $\overline{A}$  and $\mbox{acc} \thinspace A$ denote the set of isolated points of $A$, interior points of $A$, closure of $A$ and accumulation points of $A$, respectively. For $\lambda\in \mathbb{C}$ and $r >0$, $B(\lambda, r)$ denotes the open disc of radius $r$ centred at $\lambda.$ For $T \in B(X)$, the null space of $T$, range of $T$, spectrum of $T$ and adjoint of $T$ are denoted by $N(T)$, $T(X)$, $\sigma(T)$ and $T^*$, respectively.
Let $\alpha(T)=$ dim $N(T)$ and $\beta(T)=$ codim $T(X)$ be the nullity of $T$ and deficiency of $T,$ respectively. An operator  $T \in B(X)$ is called bounded below if $T$ is injective and $T(X)$ is closed.  A bounded linear operator $T$ is said to be an upper semi-Fredholm operator if $\alpha(T) < \infty $ and $T(X)$ is closed.  An operator $T \in B(X)$ is said to be  a lower semi-Fredholm operator if $\beta(T) < \infty $. An operator $T \in B(X)$ is called a semi-Fredholm operator if it is either upper semi-Fredholm or lower semi-Fredholm. For a semi-Fredholm operator $T$, the index of $T$ is defined by $\mbox{ind}(T):= \alpha(T)-\beta(T)$. 
The \emph{point spectrum}, \emph{approximate point spectrum} and  \emph{semi-Fredholm  spectrum} are defined by  \begin{align*}
\sigma_p(T):&= \{ \lambda \in \mathbb{C}: \lambda I-T \thinspace \mbox{is not injective} \},\\
\sigma_a(T):&=\{\lambda \in \mathbb{C}: \lambda I-T \thinspace \mbox{is not bounded below}\},\\\sigma_{sf}(T):&=\{\lambda \in \mathbb{C}:   \lambda I-T \thinspace \mbox{is not semi-Fredholm}\}, \thinspace  \mbox{respectively.}
\end{align*}
Clearly, $\sigma_{sf}(T) \subset \sigma_{a}(T)$. Let $\rho_a(T)= \mathbb{C} \setminus \sigma_a(T)$ and $\rho_{sf}(T)= \mathbb{C} \setminus \sigma_{sf}(T)$. An operator $T \in B(X)$ is called said to be an upper semi-Weyl (lower semi-Weyl, respectively) operator  if it is upper semi-Fredholm (lower semi-Fredholm, respectively) and 
 $\mbox{ind}(T) \leq 0$ ($\mbox{ind}(T) \geq 0$, respectively). A bounded linear operator $T$ is called Weyl if it is semi-Fredholm and $\mbox{ind}(T)=0$. The \emph{Weyl essential approximate point  spectrum} and \emph{Weyl spectrum} are defined by
\begin{align*}
\sigma_{uw}(T):&= \{ \lambda \in \mathbb{C}: \lambda I -T \thinspace \mbox{is not upper semi-Weyl} \},\\
\sigma_w(T):&= \{ \lambda \in \mathbb{C}:\lambda I-T \thinspace \mbox{is not Weyl} \}, \thinspace \mbox{respectively.} 
\end{align*}
Let $\rho_{uw}(T)= \mathbb{C} \setminus \sigma_{uw}(T)$ and $\rho_{w}(T)= \mathbb{C} \setminus \sigma_{bw}(T)$. 
Let  $T \in B(X)$, then for each non negative integer $n$, $T$ induces a linear transformation $$ \Psi_{n}: T^n(X)/T^{n+1}(X)\longrightarrow T^{n+1}(X)/T^{n+2}(X)$$ defined by 
$$\Psi_{n}(y+T^{n+1}(X)):=T y+T^{n+1}(X), \quad y\in T^{n}(X).$$ Clearly, $\Psi_n$ is surjective for each $n$. For each $n$, let $k_{n}(T)=\alpha(\Psi_{n})$. Define a norm $\Vert . \Vert_{n}$ on $T^n(X)$ by 
$$\Vert y \Vert_{n}:=\inf\limits_{x \in X}\{\Vert x \Vert:y=T^n x\}.$$
The topology induced by this norm is called operator range topology on $T^n(X)$. 
An operator $T \in B(X)$ is said to have uniform descent for $ n \geq d$ if there exists a non  negative integer $d$ such that $k_{n}(T)=0$ for $n \geq d$. In addition, if $T^{n}(X)$ is closed in the operator range topology of $T^d(X)$ for $n \geq d$, then $T$ is said to have topological uniform descent for $n \geq d$. The \emph{topological uniform descent spectrum} is defined by $$\sigma_{\Gamma}(T):=\{\lambda \in \mathbb{C}: \lambda I-T \thinspace \mbox{does not have topological uniform descent}\}.$$
Let $\rho_{\Gamma}(T)=\mathbb{C} \setminus \sigma_{\Gamma}(T)$ be the topological uniform descent resolvent of $T$.

  For $T \in B(X)$ consider the set $$\Delta(T):=\{n\in \mathbb{N} :m \geq n, m\in \mathbb{N}\thinspace \thinspace \mbox{implies that} \thinspace  \thinspace T^n(X) \cap N(T) \subset T^m(X) \cap N(T)\}.$$
The degree of stable iteration is defined by $\mbox{dis}(T):= \inf \Delta(T)$ whenever $\Delta(T) \neq \emptyset$.  If $\Delta(T)=\emptyset$, set  $\mbox{dis}(T)= \infty$.
Let $T \in B(X)$. An operator $T \in B(X)$  is said to be quasi-Fredholm of degree $d$ if there exists a $d \in \mathbb{N}$ such that

(i) $\mbox{dis}(T)=d$,

(ii) $T^{n}(X)$ is a closed subspace of $X$ for each $n \geq d$,

(iii) $T(X)+ N(T^d)$ is a closed subspace of $X$.
 
For $T \in B(X)$, the \emph{quasi-Fredholm spectrum} is defined by $$\sigma_{qf}(T):=\{\lambda \in \mathbb{C}: \lambda I-T \thinspace \mbox{is not quasi-Fredholm}\}.$$ Let $\rho_{qf}(T)= \mathbb{C} \setminus \sigma_{qf}(T)$ be the quasi-Fredholm resolvent of $T$. By \cite[Theorem 1.96]{1} we know that $\sigma_{\Gamma}(T) \subset \sigma_{qf}(T) \subset \sigma_{sf}(T)$.
For a bounded linear operator $T$  and a non negative integer $n$, denote by $T_{[n]}$  the restriction of $T$ to $T^n(X)$. An opeartor $T \in B(X)$ is said to be B-Fredholm (an upper semi B-Fredholm, a lower semi B-Fredholm, respectively) if for some non negative integer $n$, $T^n(X)$ is closed and $T_{[n]}$ is Fredholm (an upper semi B-Fredholm, a lower semi B-Fredholm, respectively). In this case, the $\mbox{ind}(T)$ is defined to be the $\mbox{ind}T_{[n]}$ (see \cite{2}).  An operator is said to be a semi B-Fredholm operator if is a lower semi B-Fredholm or an upper semi B-Fredholm operator. The \emph{ semi B-Fredholm spectrum} is defined by  $$\sigma_{sbf}(T):=\{\lambda \in \mathbb{C}: \lambda I-T \thinspace \mbox{is not  semi B-Fredholm}\}.$$
Clearly, $\sigma_{sbf}(T) \subset \sigma_{sf}(T)$. Let $\rho_{sbf}(T)= \mathbb{C} \setminus \sigma_{sbf}(T)$.  By \cite[Theorem 1.116]{1} we know that every semi B-Fredholm operator is quasi-Fredholm. Therefore, $\sigma_{\Gamma}(T) \subset \sigma_{qf}(T) \subset \sigma_{sbf}(T) \subset \sigma_{sf}(T)$.
 An operator $T \in B(X)$ is called an upper semi B-Weyl (B-Weyl, respectively) operator if $T$ is an upper semi B-Fredholm (B-Fredholm, respectively) having $\mbox{ind}(T) \leq 0$ ($\mbox{ind}(T)=0$, respectively). The \emph{upper semi B-Weyl spectrum} and \emph{B-Weyl spectrum} are defined by  \begin{align*}
 \sigma_{usbw}(T):&=\{\lambda \in \mathbb{C}: \lambda I-T \thinspace \mbox{is not upper semi B-Weyl}\},\\
 \sigma_{bw}(T):&=\{\lambda \in \mathbb{C}: \lambda I-T \thinspace \mbox{is not B-Weyl}\} \thinspace \mbox{respectively}.
 \end{align*}
 
Let $\rho_{bw}(T)= \mathbb{C} \setminus \sigma_{bw}(T)$ and $\rho_{w}(T)= \mathbb{C} \setminus \sigma_{w}(T)$. For an operator $T \in B(X)$, the ascent of $T$ denoted by $p(T)$ is the smallest non negative integer $p$ such that $N(T^{p})= N(T^{p+1})$. If  no such integer exists, set $p(T)=\infty$. For an operator $T \in B(X)$, the descent of $T$ denoted by $q(T)$ is the smallest non negative integer $q$ such that $T^{q}(X)=T^{q+1}(X)$. If  no such integer exists,  set $q(T)=\infty$. Evidently,  $p(T)=0$ if and only if $T$ is injective and $q(T)=0$ if and only if $T$ is surjective. By \cite[Theorem 1.20]{1} we know that if both $p(T)$ and $q(T)$ are finite, then $p(T)=$ $q(T)$.   An operator $T \in B(X)$ is called left Drazin invertible if $p(T)<\infty$ and $T^{p+1}(X)$ is closed. We say that $\lambda \in \mbox{iso} \thinspace \sigma_{a}(T)$ is a  left pole of the resolvent of  $T$ if $\lambda I-T$ is left Drazin invertible. An operator $T \in B(X)$ is called right Drazin invertible if $q(T)<\infty$ and $T^{q}(X)$ is closed. An operator $T \in \mathcal{B}(X)$ is called Drazin invertible if $p(T)=q(T)< \infty$. We say that $\lambda \in \mbox{iso} \thinspace \sigma(T)$ is a  pole of the resolvent of $T$ if $\lambda I-T$ is Drazin invertible. The \emph{left Drazin spectrum} and \emph{Drazin spectrum} are defined by  
\begin{align*}
\sigma_{ld}(T):&=\{\lambda \in \mathbb{C}: \lambda I-T \thinspace \mbox{is not left Drazin invertible}\},\\
\sigma_{d}(T):&=\{\lambda \in \mathbb{C}: \lambda I-T \thinspace \mbox{is not  Drazin invertible}\}, \thinspace \mbox{respectively}.
\end{align*}
By \cite[Theorem 1.142]{1} we know that $\sigma_{qf}(T) \subset \sigma_{ld}(T) \subset \sigma_d(T)$. The set of all the poles of the resolvent of $T$ and  all left poles of the resolvent of  $T$ are denoted by $\Pi(T)=\sigma(T) \setminus \sigma_{d}(T)$ and $\Pi^{a}(T)=\sigma_a(T) \setminus \sigma_{ld}(T)$, respectively.

An operator $T \in B(X)$ is said to have the single-valued extension property (SVEP) at $\lambda_{0} \in \mathbb{C}$, if for every neighborhood $V$ of $\lambda_{0}$ the only analytic function $f:V \rightarrow X $ which satisfies the equation $(\lambda I-T)f(\lambda)=0$ is the function $f=0$. An operator $T \in B(X)$ is said to have SVEP if $T$ has SVEP at every $\lambda \in \mathbb{C}$. It is known that if $\mbox{int} \thinspace \sigma_p(T)= \emptyset$, then $T$ has SVEP. Recall that $$p(\lambda I-T)<\infty \thinspace \thinspace  \mbox{implies that} \thinspace \thinspace  T \thinspace \mbox{has SVEP at}\thinspace\thinspace \lambda$$ and $$q(\lambda I-T)<\infty \thinspace \thinspace  \mbox{implies that} \thinspace \thinspace  T^* \thinspace \mbox{has SVEP at}\thinspace\thinspace \lambda.$$ 

 Zeng et al. \cite{5} studied the components of quasi-Fredholm resolvent and characterized them by means of localized SVEP. Shi  \cite{4} considered the topological uniform descent and studied how topological uniform descent resolvent is distributed in $\rho_{sf}(T)$. As we know that for an operator $T \in B(X)$, topological uniform descent, quasi-Fredholmness, semi-Fredholmness and semi B-Fredholness are closely related to each other. Motivated by them  we study the distribution of $\rho_{qf}(T)$ in $\rho_{sbf}(T)$. Zhu and Li \cite{6} obtained results for non commuting compact perturbations of an operator $T\in B(X)$ using semi-Fredholm spectrum. Recently, for $T \in B(X)$ various authors (see \cite{9,10,11}) discussed various spectral properties under compact (not necessarily commuting) perturbations. Motivated by them we obtain  results for  compact perturbations of an operator $T\in B(X)$ using quasi-Fredholm spectrum.

In this paper we discuss some characteristics of quasi-Fredholm resolvent set $\rho_{qf}(T)$ for $T\in B(X)$. We give results regarding the distribution of semi B-Fredholm domain $\rho_{sbf}(T)$ in $\rho_{qf}(T)$. We prove that if $ \mbox{int} \thinspace  \sigma_{sbf}(T) = \emptyset$, then there  is one-to-one correspondence between the bounded components of $\rho_{sbf}(T)$ and the  bounded components of $\rho_{qf}(T)$. In the last section we discuss the permanence of SVEP under(small) compact perturbations using quasi-Fredholm resolvent set and quasi-Fredholm spectrum. Also, we describe those operators for which SVEP is stable under compact perturbations by  means of quasi-Fredholm resolvent.
 \section{Main Results}
  It is known that the sets $\rho_{sf}(T), \rho_{sbf}(T),\rho_{qf}(T)$ and $\rho_{sf}(T)$ are nonempty open sets of $\mathbb{C},$ they can be decomposed into (pairwise disjoint, maximal, open, connected) non-empty components. 
\begin{lemma}
 Let $T \in B(X)$. Then $\sigma_{sf}(T)=\sigma_{sbf}(T) \cup  \mbox{iso} \thinspace  \sigma_{sf}(T)$.
\end{lemma}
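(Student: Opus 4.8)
The plan is to prove the two inclusions separately. The inclusion $\sigma_{sbf}(T) \cup \mbox{iso} \thinspace \sigma_{sf}(T) \subseteq \sigma_{sf}(T)$ is immediate: we already noted in the preliminaries that $\sigma_{sbf}(T) \subseteq \sigma_{sf}(T)$, while $\mbox{iso} \thinspace \sigma_{sf}(T) \subseteq \sigma_{sf}(T)$ holds trivially, since the set of isolated points of any $A \subseteq \mathbb{C}$ is contained in $A$. So essentially all the content lies in the reverse inclusion.

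For $\sigma_{sf}(T) \subseteq \sigma_{sbf}(T) \cup \mbox{iso} \thinspace \sigma_{sf}(T)$, take $\lambda_0 \in \sigma_{sf}(T)$ and assume $\lambda_0 \notin \sigma_{sbf}(T)$; it then suffices to show $\lambda_0 \in \mbox{iso} \thinspace \sigma_{sf}(T)$. Since $\lambda_0 I - T$ is semi B-Fredholm, by \cite[Theorem 1.116]{1} it has topological uniform descent, and I would invoke the punctured neighbourhood theorem for semi B-Fredholm operators: there is an $r > 0$ such that $\lambda I - T$ is semi-Fredholm for every $\lambda$ with $0 < |\lambda - \lambda_0| < r$ (indeed, of the same semi-Fredholm type, with $\mbox{ind}(\lambda I - T) = \mbox{ind}(\lambda_0 I - T)$). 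Hence $B(\lambda_0, r) \cap \sigma_{sf}(T) = \{\lambda_0\}$, and since $\lambda_0 \in \sigma_{sf}(T)$ this says precisely that $\lambda_0$ is an isolated point of $\sigma_{sf}(T)$.

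The only nontrivial ingredient, and hence the main obstacle, is the punctured neighbourhood theorem used in the previous paragraph. If one prefers a self-contained derivation rather than a direct citation, it can be obtained from the theory of topological uniform descent: on a punctured disc about $\lambda_0$ the quantities $\alpha(\lambda I - T)$ and $\beta(\lambda I - T)$ are constant and $(\lambda I - T)(X)$ is closed, and since $\lambda_0 I - T$ is semi B-Fredholm one of $\alpha(T_{[n]})$, $\beta(T_{[n]})$ is finite for $n$ large; identifying these constants with the nullity/deficiency of the restriction then forces the relevant constant to be finite, so that $\lambda I - T$ is semi-Fredholm throughout the punctured disc. In writing the final proof I would pin down the exact statement in \cite{1} that records this for semi B-Fredholm operators, so that the citation is precise; modulo that, the argument above is complete.
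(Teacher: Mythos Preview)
Your proof is correct and follows essentially the same route as the paper: both establish the nontrivial inclusion by taking $\lambda_0 \in \sigma_{sf}(T)\setminus\sigma_{sbf}(T)$ and invoking the punctured-neighbourhood theorem for semi B-Fredholm operators to conclude $\lambda_0 \in \mbox{iso}\,\sigma_{sf}(T)$. The precise citation you are looking for is \cite[Theorem 1.117]{1}, which the paper uses directly (so the detour through \cite[Theorem 1.116]{1} and topological uniform descent is unnecessary, though not wrong).
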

\begin{proof}
Let $\lambda_{0} \in \sigma_{sf}(T) \setminus \sigma_{sbf}(T)$. Then $\lambda_{0} I-T$ is semi B-Fredholm. By \cite[Theorem 1.117]{1} there exists an $\epsilon >0$ such that $\lambda I-T$ is semi-Fredholm for all $\lambda \in B(\lambda_{0}, \epsilon) \setminus \{\lambda_0\}.$ Therefore, $\lambda_{0} \in  \mbox{iso} \thinspace  \sigma_{sf}(T)$. Thus,  $\sigma_{sf}(T) \subset \sigma_{sbf}(T) \cup  \mbox{iso} \thinspace  \sigma_{sf}(T)$. The reverse inclusion  always holds.
\end{proof}

Recall that  a hole of a compact set $\sigma \subset \mathbb{C}$ is a bounded component of $\mathbb{C} \setminus \sigma$. It is known that  $\mathbb{C} \setminus \sigma$ has always an unbounded component. Therefore,  $\mathbb{C} \setminus \sigma$ is connected if and only if $\sigma$ has no holes.
\begin{theorem}\label{theorem1}
Let $T \in B(X)$, then $\rho_{sbf}(T)$ is connected if and only if $\rho_{sf}(T)$ is connected.
\end{theorem}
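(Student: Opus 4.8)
The plan is to combine the previous lemma with a point-set fact about removing discrete sets from open subsets of $\mathbb{C}$. Put $E := \sigma_{sf}(T)\setminus\sigma_{sbf}(T)$. Since $\sigma_{sbf}(T)\subset\sigma_{sf}(T)$ we have $\rho_{sf}(T)\subset\rho_{sbf}(T)$ and $E=\rho_{sbf}(T)\setminus\rho_{sf}(T)$; in particular, as $\rho_{sf}(T)$ is open, $E$ is relatively closed in $\rho_{sbf}(T)$. The first step is to observe, using the previous lemma, that $E\subset\mbox{iso}\thinspace\sigma_{sf}(T)$: if $\lambda\in E$ then $\lambda\in\sigma_{sf}(T)\setminus\sigma_{sbf}(T)$, and the decomposition $\sigma_{sf}(T)=\sigma_{sbf}(T)\cup\mbox{iso}\thinspace\sigma_{sf}(T)$ forces $\lambda\in\mbox{iso}\thinspace\sigma_{sf}(T)$. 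Hence for each $\lambda\in E$ there is $\epsilon_\lambda>0$ with $B(\lambda,\epsilon_\lambda)\setminus\{\lambda\}\subset\rho_{sf}(T)$. I would draw two consequences: (a) $B(\lambda,\epsilon_\lambda)$ meets $E$ only at $\lambda$, so $E$ is a discrete subset of $\mathbb{C}$; and (b) every $\lambda\in E$ is a limit point of $\rho_{sf}(T)$, so $\rho_{sf}(T)\subset\rho_{sbf}(T)\subset\overline{\rho_{sf}(T)}$.

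The backward implication then follows at once from (b): a set lying between a connected set and its closure is connected, so if $\rho_{sf}(T)$ is connected then so is $\rho_{sbf}(T)$.

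For the forward implication I would isolate the following topological lemma: if $\Omega\subset\mathbb{C}$ is open and connected and $D\subset\Omega$ is relatively closed and discrete, then $\Omega\setminus D$ is connected. To prove it, suppose $\Omega\setminus D=U\cup V$ is a separation into disjoint, nonempty, relatively open sets; since $\Omega\setminus D$ is open in $\mathbb{C}$, both $U$ and $V$ are open in $\mathbb{C}$. For each $d\in D$ choose $\epsilon_d>0$ with $B(d,\epsilon_d)\setminus\{d\}\subset\Omega\setminus D$; since a punctured disc is connected, $B(d,\epsilon_d)\setminus\{d\}$ lies wholly in $U$ or wholly in $V$, and this partitions $D=D_U\cup D_V$. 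Then $U\cup D_U$ and $V\cup D_V$ are disjoint, nonempty, cover $\Omega$, and are open in $\mathbb{C}$ (for $d\in D_U$ the whole disc $B(d,\epsilon_d)$ is contained in $U\cup D_U$), contradicting the connectedness of $\Omega$. Applying this with $\Omega=\rho_{sbf}(T)$ and $D=E$ shows that if $\rho_{sbf}(T)$ is connected then $\rho_{sf}(T)=\rho_{sbf}(T)\setminus E$ is connected, which completes the equivalence.

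The only genuine obstacle is the topological lemma, and it is mild. The step that needs care is (a)--(b), i.e. checking that $E$ is a discrete, relatively closed subset of $\rho_{sbf}(T)$ sitting inside $\overline{\rho_{sf}(T)}$; this is exactly where the previous lemma enters, since it forces $E\subset\mbox{iso}\thinspace\sigma_{sf}(T)$. Finally, I note that applying the topological lemma component by component yields a bijection $\Omega\mapsto\Omega\setminus E$ between the connected components of $\rho_{sbf}(T)$ and those of $\rho_{sf}(T)$, which is strictly stronger than the stated theorem and will be convenient later.
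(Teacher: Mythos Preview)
Your proof is correct, and in fact more self-contained than the paper's, but the two directions are organized differently.

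For the forward implication (from $\rho_{sbf}(T)$ connected to $\rho_{sf}(T)$ connected), the paper simply notes that $\rho_{sf}(T)=\rho_{sbf}(T)\setminus\mbox{iso}\,\sigma_{sf}(T)$ and then invokes, without proof, the fact that removing an at most countable set from a connected open subset of $\mathbb{C}$ preserves connectedness. Your route is the same in spirit but you isolate and prove the precise topological lemma (open connected minus a discrete relatively closed subset is connected) via the connectedness of punctured discs; this is cleaner and avoids the detour through countability.

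For the backward implication the two arguments genuinely differ. The paper argues by contradiction: a hypothetical bounded component $\Omega$ of $\rho_{sbf}(T)$ must meet $\rho_{sf}(T)$ (otherwise $\Omega$ would be an open set contained in $\mbox{iso}\,\sigma_{sf}(T)$), and then the component of $\rho_{sf}(T)$ through any such point is trapped inside $\Omega$, hence bounded, contradicting connectedness of $\rho_{sf}(T)$. Your argument is the one-line sandwich $\rho_{sf}(T)\subset\rho_{sbf}(T)\subset\overline{\rho_{sf}(T)}$, which is shorter and more conceptual. On the other hand, the paper's component-tracking argument is exactly what gets reused later (in the proofs of Theorem~\ref{theorem2} and Lemma~\ref{lemma4}) to match components of the various resolvent sets, so it earns its keep there. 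Your closing remark that the topological lemma, applied componentwise, already yields a bijection $\Omega\mapsto\Omega\setminus E$ between components anticipates this and would streamline those later arguments.
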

\begin{proof}
Suppose that $\rho_{sbf}(T)$ is connected. Since $\sigma_{sf}(T)=\sigma_{sbf}(T) \cup \mbox{iso} \thinspace  \sigma_{sf}(T)$, $\rho_{sf}(T)=\rho_{sbf}(T) \setminus \mbox{iso} \thinspace \sigma_{sf}(T)$. As $\rho_{sbf}(T)$ is connected and $\mbox{iso} \thinspace \sigma_{sf}(T)$ is at most countable  we deduce that $\rho_{sf}(T) $ is connected.

Conversely, suppose that $\rho_{sf}(T)$ is connected. Assume that $\rho_{sbf}(T)$ is not connected then there exists a bounded component $\Omega$ of $\rho_{sbf}(T)$. Then either $\Omega \cap \rho_{sf}(T) = \emptyset$ or $\Omega \cap \rho_{sf}(T) \neq \emptyset $. If  $\Omega$ $\cap$ $ \rho_{sf}(T) = \emptyset $, then $\Omega \subset \sigma_{sf}(T)$ which implies that  $\Omega \subset \mbox{iso} \thinspace \sigma_{sf}(T)$ which is not possible. Therefore, $\Omega \cap \rho_{sf}(T) \neq \emptyset $. Then there exists  $\lambda_0$ such that $\lambda_0 \in \Omega \cap \rho_{sf}(T)$. Let $\Omega ^{'}$ be the component of $\rho_{sf}(T)$ containing $\lambda_0$. Therefore, $\Omega ^{'}$ is an open connected subset of $\rho_{sbf}(T)$ such that $\Omega \cap \Omega ^{'} \neq \emptyset$. This implies that $\Omega ^{'} \subset \Omega$. Thus, $\Omega^{'}$ is a bounded component of  $\rho_{sf}(T)$, a contradiction. Hence, $\rho_{sbf}(T)$ is connected.
\end{proof}
By \cite[Lemma 2.2]{3} we have $\sigma_{uw}(T)=\sigma_{usbw}(T) \cup \mbox{iso} \thinspace \sigma_{uw}(T)$ and $\sigma_{w}(T)=\sigma_{bw}(T) \cup \mbox{iso} \thinspace \sigma_{w}(T)$. Following the lines of the proof of Theorem \ref{theorem1} we have the following result:
\begin{theorem}
Let $T \in B(X)$, then

(i) $\rho_{bw}(T)$ is connected if and only if $\rho_{w}(T)$ is connected.

(ii) $\rho_{usbw}(T)$ is connected if and only if $\rho_{uw}(T)$ is connected.
\end{theorem}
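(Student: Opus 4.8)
The plan is to mimic the proof of Theorem~\ref{theorem1} essentially verbatim, replacing $\sigma_{sf}$ by $\sigma_{w}$ (resp. $\sigma_{uw}$) and $\sigma_{sbf}$ by $\sigma_{bw}$ (resp. $\sigma_{usbw}$), and using the decompositions $\sigma_{w}(T)=\sigma_{bw}(T)\cup\mbox{iso}\thinspace\sigma_{w}(T)$ and $\sigma_{uw}(T)=\sigma_{usbw}(T)\cup\mbox{iso}\thinspace\sigma_{uw}(T)$ quoted above from \cite[Lemma 2.2]{3}. Since (i) and (ii) are proved identically, I would carry out (i) in full and only indicate the substitution needed for (ii).

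First I would record the auxiliary facts that make the Theorem~\ref{theorem1} argument run. (a) Every Weyl operator is B-Weyl (take $n=0$ in the definition of B-Fredholm, since $T^{0}(X)=X$ is closed and $T_{[0]}=T$), so $\sigma_{bw}(T)\subset\sigma_{w}(T)$, i.e. $\rho_{w}(T)\subset\rho_{bw}(T)$; likewise $\sigma_{usbw}(T)\subset\sigma_{uw}(T)$. (b) The sets $\sigma_{w}(T)$ and $\sigma_{bw}(T)$ are closed, so $\rho_{w}(T)$ and $\rho_{bw}(T)$ are open; combining with the decomposition above gives $\rho_{w}(T)=\rho_{bw}(T)\setminus\mbox{iso}\thinspace\sigma_{w}(T)$. (c) $\mbox{iso}\thinspace\sigma_{w}(T)$ is at most countable.

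For the forward implication of (i): if $\rho_{bw}(T)$ is connected, then $\rho_{w}(T)$ is obtained from the open connected planar set $\rho_{bw}(T)$ by deleting the at most countable set $\mbox{iso}\thinspace\sigma_{w}(T)$, and an open connected subset of $\mathbb{C}$ remains connected after removal of countably many points, so $\rho_{w}(T)$ is connected. For the converse, suppose $\rho_{w}(T)$ is connected but $\rho_{bw}(T)$ is not; then $\rho_{bw}(T)$ has a bounded component $\Omega$. If $\Omega\cap\rho_{w}(T)=\emptyset$ then $\Omega\subset\sigma_{w}(T)$, and since $\Omega\cap\sigma_{bw}(T)=\emptyset$ this forces $\Omega\subset\mbox{iso}\thinspace\sigma_{w}(T)$, impossible for a nonempty open set. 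Hence there is $\lambda_{0}\in\Omega\cap\rho_{w}(T)$; let $\Omega'$ be the component of $\rho_{w}(T)$ containing $\lambda_{0}$. Since $\rho_{w}(T)\subset\rho_{bw}(T)$, $\Omega'$ is a connected subset of $\rho_{bw}(T)$ meeting $\Omega$, so $\Omega'\subset\Omega$, whence $\Omega'$ is bounded — contradicting that a connected $\rho_{w}(T)$ coincides with its unique unbounded component. Thus $\rho_{bw}(T)$ is connected.

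Part (ii) is obtained by the same two-step argument with $(\sigma_{w},\sigma_{bw})$ replaced by $(\sigma_{uw},\sigma_{usbw})$; the only inputs are the corresponding decomposition from \cite[Lemma 2.2]{3}, the inclusion $\sigma_{usbw}(T)\subset\sigma_{uw}(T)$, and countability of $\mbox{iso}\thinspace\sigma_{uw}(T)$. I do not expect a genuine obstacle here: the whole content is bookkeeping, namely checking that the facts used implicitly in Theorem~\ref{theorem1} (the inclusion $\sigma_{bw}\subset\sigma_{w}$, resp. $\sigma_{usbw}\subset\sigma_{uw}$, the openness of the resolvent sets, and the countability of the isolated-point sets) are in place, and all of these are immediate.
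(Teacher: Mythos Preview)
Your proposal is correct and is precisely the approach the paper takes: the paper simply notes the decompositions $\sigma_{w}(T)=\sigma_{bw}(T)\cup\mbox{iso}\thinspace\sigma_{w}(T)$ and $\sigma_{uw}(T)=\sigma_{usbw}(T)\cup\mbox{iso}\thinspace\sigma_{uw}(T)$ from \cite[Lemma 2.2]{3} and states that the result follows ``following the lines of the proof of Theorem~\ref{theorem1}''. Your write-up is a faithful expansion of exactly that, with the implicit inputs (the inclusions, openness, and countability of isolated points) made explicit.
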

\begin{theorem}\label{theorem2}
Let $T \in B(X)$ and $\Omega _{qf}$ be a connected component of $\rho_{qf}(T)$. If $\Omega _{qf} \cap \rho_{sbf}(T) \neq \emptyset$, then there exists a  unique connected component $\Omega _{sbf}$ of $\rho_{sbf}(T)$ such that $\Omega _{qf}=\Omega _{sbf} \cup E$, where $E_0 \subset \mbox{iso} \thinspace \sigma_{sf}(T)$.
\end{theorem}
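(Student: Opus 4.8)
The plan is to use the punctured‑neighbourhood theory for quasi‑Fredholm operators together with the hypothesis $\Omega_{qf}\cap\rho_{sbf}(T)\neq\emptyset$ to pin down a finiteness condition that then propagates across all of $\Omega_{qf}$. First I would fix $\mu_0\in\Omega_{qf}\cap\rho_{sbf}(T)$ and let $\Omega_{sbf}$ be the connected component of $\rho_{sbf}(T)$ containing $\mu_0$. Since $\sigma_{\Gamma}(T)\subset\sigma_{qf}(T)\subset\sigma_{sbf}(T)\subset\sigma_{sf}(T)$, we have $\rho_{sf}(T)\subset\rho_{sbf}(T)\subset\rho_{qf}(T)$, and as $\Omega_{sbf}$ is a connected subset of $\rho_{qf}(T)$ meeting $\Omega_{qf}$ it must lie inside $\Omega_{qf}$. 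Thus it remains only to prove that $E:=\Omega_{qf}\setminus\Omega_{sbf}=\Omega_{qf}\cap\sigma_{sbf}(T)$ is contained in $\mbox{iso}\,\sigma_{sf}(T)$ and that $\Omega_{qf}\cap\rho_{sbf}(T)=\Omega_{sbf}$.

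Next I would invoke the punctured neighbourhood theorem for quasi‑Fredholm operators: every $\lambda\in\Omega_{qf}$ has a deleted disc on which $\mu I-T$ is semi‑regular, i.e. has closed range and satisfies $N(\mu I-T)\subset\bigcap_{n\geq0}(\mu I-T)^n(X)$. Hence the semi‑regular spectrum $\sigma_{se}(T)$ meets $\Omega_{qf}$ in a set that is closed and discrete in $\Omega_{qf}$, so $\Omega^{sr}:=\Omega_{qf}\setminus\sigma_{se}(T)$ is a non‑empty, open, connected subset of a single component of $\rho_{se}(T):=\mathbb{C}\setminus\sigma_{se}(T)$, and on such a component both $\alpha(\lambda I-T)$ and $\beta(\lambda I-T)$ are constant (with values in $\mathbb{N}\cup\{\infty\}$). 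Now the point $\mu_0$ enters: since $\mu_0 I-T$ is semi B‑Fredholm, \cite[Theorem 1.117]{1} yields a deleted disc of $\mu_0$ on which $\lambda I-T$ is semi‑Fredholm, and that disc lies in $\Omega^{sr}$ because $\mu_0 I-T$ is itself quasi‑Fredholm; at such a $\lambda$ we have $\alpha(\lambda I-T)<\infty$ or $\beta(\lambda I-T)<\infty$, so by the constancy just noted the same alternative holds throughout $\Omega^{sr}$. As semi‑regular operators have closed range, it follows that $\lambda I-T$ is semi‑Fredholm for every $\lambda\in\Omega^{sr}$, i.e. $\Omega^{sr}\subset\rho_{sf}(T)$.

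With this in hand the rest is topology. From $\Omega^{sr}\subset\rho_{sf}(T)$ I get $\Omega_{qf}\setminus\rho_{sf}(T)\subset\Omega_{qf}\cap\sigma_{se}(T)$, a closed discrete subset of $\Omega_{qf}$; in particular, since $\sigma_{sbf}(T)\subset\sigma_{sf}(T)$, each point of $E=\Omega_{qf}\cap\sigma_{sbf}(T)$ lies in $\sigma_{sf}(T)$ while having a deleted neighbourhood contained in $\rho_{sf}(T)$, so $E\subset\mbox{iso}\,\sigma_{sf}(T)$ and $E$ is closed and discrete in $\Omega_{qf}$. Therefore $\Omega_{qf}\cap\rho_{sbf}(T)=\Omega_{qf}\setminus E$ is connected (deleting a closed discrete set from a connected open planar set keeps it connected); being open, contained in $\rho_{sbf}(T)$ and containing $\mu_0$, it sits inside $\Omega_{sbf}$, and together with $\Omega_{sbf}\subset\Omega_{qf}\cap\rho_{sbf}(T)$ this gives $\Omega_{qf}\cap\rho_{sbf}(T)=\Omega_{sbf}$, hence $\Omega_{qf}=\Omega_{sbf}\cup E$ with $E\subset\mbox{iso}\,\sigma_{sf}(T)$. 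For uniqueness, any component $\Omega'$ of $\rho_{sbf}(T)$ with $\Omega_{qf}=\Omega'\cup E'$ and $E'\subset\mbox{iso}\,\sigma_{sf}(T)$ is a connected subset of $\rho_{sbf}(T)$ contained in $\Omega_{qf}$, so $\Omega'\subset\Omega_{qf}\cap\rho_{sbf}(T)=\Omega_{sbf}$, and maximality of the component $\Omega'$ forces $\Omega'=\Omega_{sbf}$.

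The main obstacle I anticipate is precisely the middle paragraph: quasi‑Fredholmness by itself only produces deleted discs of semi‑regular operators, not of semi‑Fredholm ones — indeed $\rho_{qf}(T)$ may contain entire components disjoint from $\rho_{sbf}(T)$ — so the finiteness of $\alpha$ or of $\beta$ genuinely has to be imported from the one semi‑Fredholm point furnished by $\mu_0$ and then carried across the connected semi‑regular set $\Omega^{sr}$ via the constancy of $\alpha$ and $\beta$ on components of $\rho_{se}(T)$. Everything else reduces to the chain $\rho_{sf}(T)\subset\rho_{sbf}(T)\subset\rho_{qf}(T)$ and elementary plane topology.
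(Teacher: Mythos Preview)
Your argument is correct, but it is organized quite differently from the paper's. The paper proceeds by a sandwich: it passes to the component $\Omega_\Gamma$ of $\rho_\Gamma(T)$ containing $\Omega_{qf}$, finds (via the proof of Theorem~\ref{theorem1}) a component $\Omega_{sf}$ of $\rho_{sf}(T)$ with $\Omega_{sf}\subset\Omega_{sbf}\subset\Omega_{qf}\subset\Omega_\Gamma$, and then invokes Shi's \cite[Theorem~1]{4} as a black box to get $\Omega_\Gamma=\Omega_{sf}\cup E$ with $E\subset\mbox{iso}\,\sigma_{sf}(T)$; the desired decomposition of $\Omega_{qf}$ then falls out of the inclusion chain. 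You instead bypass \cite{4} entirely and work directly with the semi-regular resolvent: the punctured-neighbourhood theorem for quasi-Fredholm operators makes $\Omega_{qf}\cap\sigma_{se}(T)$ discrete, so $\Omega^{sr}=\Omega_{qf}\setminus\sigma_{se}(T)$ is connected, and the constancy of $\alpha,\beta$ on components of $\rho_{se}(T)$ lets you import finiteness from the single semi-Fredholm point provided by $\mu_0$. The paper's route is shorter on the page because the hard work is outsourced to Shi's theorem; your route is self-contained and in effect re-derives the mechanism behind that theorem in the specific context needed here. One small remark: in your first paragraph you write $E:=\Omega_{qf}\setminus\Omega_{sbf}=\Omega_{qf}\cap\sigma_{sbf}(T)$ as though the second equality were already known, when in fact it is exactly one of the two things you then set out to prove; the logic in your third paragraph is fine (you first show $\Omega_{qf}\cap\sigma_{sbf}(T)$ is discrete and in $\mbox{iso}\,\sigma_{sf}(T)$, then deduce $\Omega_{qf}\cap\rho_{sbf}(T)=\Omega_{sbf}$), but the exposition would be cleaner if you deferred the identification of $E$ with $\Omega_{qf}\cap\sigma_{sbf}(T)$ until after that step.
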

\begin{proof}
As $\Omega_{qf}$ is a connected component of $\rho_{qf}(T)$ and $\rho_{qf}(T) \subset \rho_{\Gamma}(T)$, there exists a component $\Omega_{\Gamma}$ of $\rho_{\Gamma}(T)$ such that $\Omega_{qf} \subset \Omega_{\Gamma}$. Since $\Omega _{qf} \cap \rho_{sbf}(T) \neq \emptyset$ and $\rho_{sbf}(T) \subset \rho_{qf}(T)$, proceeding as in the proof of Theorem \ref{theorem1} there exists a component $\Omega _{sbf}$ of $\rho_{sbf}(T)$ such that  $\Omega _{sbf} \subset  \Omega _{qf}$. By the proof of  Theorem \ref{theorem1} we get a component  $\Omega _{sf}$ of $\rho_{sf}$ such that  $\Omega _{sf} \subset \Omega _{sbf} \subset \Omega _{qf} \subset \Omega_{\Gamma}$. Using \cite[Theorem 1]{4} we have  $\Omega _{\Gamma}= \Omega _{sf} \cup E$, where $E \subset \mbox{iso} \thinspace \sigma_{sf}(T)$.  This gives $\Omega _{sbf} \subset \Omega_{qf} \subset \Omega_{sf} \cup E \subset \Omega _{sbf} \cup E$. Therefore, there exists  $E_0 \subset E \subset \mbox{iso} \thinspace \sigma_{sf}(T)$ such that  $\Omega _{qf}=\Omega _{sbf} \cup E_0$.

Assume that there exist connected components  $\Omega _{sbf}$ and $\Omega _{sbf} ^{'}$ of $\rho_{sbf}(T)$  such that $\Omega _{qf}=\Omega _{sbf} \cup E$ and  $\Omega _{qf}=\Omega _{sbf} ^{'} \cup F$, where $E, F \subset \mbox{iso} \thinspace \sigma_{sf}(T)$. Then $\Omega _{sbf} \cup E = \Omega _{sbf} ^{'} \cup F$ which implies that $\Omega _{sbf} \subset F$, a contradiction.
\end{proof}
\begin{corollary}\label{corollary1}
Let $T \in B(X)$ and $\Omega_{sbf}$  be a connected component of $\rho_{sbf}(T)$. Then there exists a  unique connected component $\Omega_{qf}$ of $\rho_{qf}(T)$ such that  $\Omega_{qf}= \Omega_{sbf} \cup E, $ where $E \subset \mbox{iso} \thinspace  \sigma_{sf}(T).$
\end{corollary}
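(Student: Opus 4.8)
The plan is to deduce this directly from Theorem~\ref{theorem2}. First I would start with a connected component $\Omega_{sbf}$ of $\rho_{sbf}(T)$. Since $\rho_{sbf}(T) \subset \rho_{qf}(T)$ and $\Omega_{sbf}$ is open and connected, it is contained in a unique connected component $\Omega_{qf}$ of $\rho_{qf}(T)$. In particular $\Omega_{qf} \cap \rho_{sbf}(T) \supseteq \Omega_{sbf} \neq \emptyset$, so Theorem~\ref{theorem2} applies to $\Omega_{qf}$: there is a unique connected component $\Omega_{sbf}'$ of $\rho_{sbf}(T)$ and a set $E_0 \subset \mbox{iso}\thinspace\sigma_{sf}(T)$ with $\Omega_{qf} = \Omega_{sbf}' \cup E_0$.

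Next I would identify $\Omega_{sbf}$ with $\Omega_{sbf}'$. We have $\Omega_{sbf} \subset \Omega_{qf} = \Omega_{sbf}' \cup E_0$. If $\Omega_{sbf} \neq \Omega_{sbf}'$, then, being distinct components of $\rho_{sbf}(T)$, they are disjoint, whence $\Omega_{sbf} \subset E_0 \subset \mbox{iso}\thinspace\sigma_{sf}(T)$. But $\Omega_{sbf}$ is a nonempty open subset of $\mathbb{C}$ while $\mbox{iso}\thinspace\sigma_{sf}(T)$ is at most countable and has empty interior, a contradiction. Hence $\Omega_{sbf} = \Omega_{sbf}'$, so $\Omega_{qf} = \Omega_{sbf} \cup E$ with $E := E_0 \subset \mbox{iso}\thinspace\sigma_{sf}(T)$, which is the asserted decomposition.

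For the uniqueness of $\Omega_{qf}$: suppose $\Omega_{qf}$ and $\widetilde{\Omega}_{qf}$ are connected components of $\rho_{qf}(T)$ with $\Omega_{qf} = \Omega_{sbf} \cup E$ and $\widetilde{\Omega}_{qf} = \Omega_{sbf} \cup \widetilde{E}$ for some $E, \widetilde{E} \subset \mbox{iso}\thinspace\sigma_{sf}(T)$. Then both contain the nonempty set $\Omega_{sbf}$, so $\Omega_{qf} \cap \widetilde{\Omega}_{qf} \neq \emptyset$; since distinct components are disjoint, $\Omega_{qf} = \widetilde{\Omega}_{qf}$.

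I do not expect a serious obstacle here, since the statement is essentially the ``converse'' bookkeeping counterpart of Theorem~\ref{theorem2}. The only step requiring a little care is the topological observation that a nonempty open set cannot be contained in $\mbox{iso}\thinspace\sigma_{sf}(T)$, which is what forces the component $\Omega_{sbf}$ we begin with to coincide with the one supplied by Theorem~\ref{theorem2}; everything else is routine manipulation with connected components.
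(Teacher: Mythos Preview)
Your proposal is correct and follows essentially the same route as the paper: find the component $\Omega_{qf}$ of $\rho_{qf}(T)$ containing $\Omega_{sbf}$, apply Theorem~\ref{theorem2}, and conclude uniqueness from the disjointness of distinct components. The paper's proof is terser in that it asserts $\Omega_{qf}=\Omega_{sbf}\cup E_0$ directly from Theorem~\ref{theorem2} without isolating the intermediate step $\Omega_{sbf}=\Omega_{sbf}'$; your explicit verification that a nonempty open set cannot sit inside $\mbox{iso}\thinspace\sigma_{sf}(T)$ fills this in cleanly.
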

\begin{proof}
Since $\Omega_{sbf} \subset \rho_{sbf}(T) \subset \rho_{qf}(T),$ there exists a connected component $\Omega_{qf}$ of $\rho_{qf}(T)$  such that $\Omega_{sbf} \subset \Omega_{qf}$. By Theorem \ref{theorem2}  we get $\Omega_{qf}=\Omega_{sbf} \cup E_0$, where $E_0 \subset \mbox{iso} \thinspace \sigma_{sf}(T)$. Assume that there exists another connected component $\Omega_{qf} ^{'}$ of $\rho_{qf}(T)$ such that $\Omega_{qf} ^{'} =\Omega_{sbf} \cup F$, where $F \subset \mbox{iso} \thinspace \sigma_{sf}(T)$. This gives $\Omega_{sbf} \subset \Omega_{qf} \cap \Omega_{qf} ^{'}$, a contradiction.
\end{proof}
\begin{corollary} \label{corollary2}
Let $T \in B(X)$ and $\mbox{int} \thinspace \sigma_{sbf}(T) = \emptyset. $ Then $\rho_{qf}(T) \setminus \rho_{sbf}(T)$ is at most countable and $\sigma_{sbf}(T)=\sigma_{qf}(T) \cup \mbox{iso} \thinspace  \sigma_{sbf}(T)$.
\end{corollary}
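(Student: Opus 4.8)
The plan is to reduce both assertions to a single inclusion, namely $\rho_{qf}(T)\setminus\rho_{sbf}(T)\subset\mbox{iso}\thinspace\sigma_{sf}(T)$, which is where the hypothesis $\mbox{int}\thinspace\sigma_{sbf}(T)=\emptyset$ is used; after that everything is formal. First observe that $\sigma_{qf}(T)\subset\sigma_{sbf}(T)$ gives $\rho_{sbf}(T)\subset\rho_{qf}(T)$, hence $\rho_{qf}(T)\setminus\rho_{sbf}(T)=\sigma_{sbf}(T)\setminus\sigma_{qf}(T)$; so the set of ``extra'' points is the same object whether viewed in the resolvent sets or in the spectra, and it suffices to control it.

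To establish the inclusion, I would take $\lambda\in\rho_{qf}(T)\setminus\rho_{sbf}(T)$ and let $\Omega_{qf}$ be the connected component of $\rho_{qf}(T)$ containing $\lambda$. Since $\Omega_{qf}$ is open and $\mbox{int}\thinspace\sigma_{sbf}(T)=\emptyset$, the set $\Omega_{qf}$ cannot be contained in $\sigma_{sbf}(T)$, so $\Omega_{qf}\cap\rho_{sbf}(T)\neq\emptyset$. Theorem~\ref{theorem2} then yields a connected component $\Omega_{sbf}$ of $\rho_{sbf}(T)$ and a set $E\subset\mbox{iso}\thinspace\sigma_{sf}(T)$ with $\Omega_{qf}=\Omega_{sbf}\cup E$. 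Because $\lambda\notin\rho_{sbf}(T)$ we have $\lambda\notin\Omega_{sbf}$, and therefore $\lambda\in E\subset\mbox{iso}\thinspace\sigma_{sf}(T)$. This proves $\rho_{qf}(T)\setminus\rho_{sbf}(T)\subset\mbox{iso}\thinspace\sigma_{sf}(T)$.

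Now the first conclusion is immediate: $\mbox{iso}\thinspace\sigma_{sf}(T)$ is at most countable (as already noted in the proof of Theorem~\ref{theorem1}), hence so is $\rho_{qf}(T)\setminus\rho_{sbf}(T)$. For the second, $\sigma_{qf}(T)\cup\mbox{iso}\thinspace\sigma_{sbf}(T)\subset\sigma_{sbf}(T)$ is clear since $\sigma_{qf}(T)\subset\sigma_{sbf}(T)$. For the reverse inclusion, let $\lambda_0\in\sigma_{sbf}(T)\setminus\sigma_{qf}(T)=\rho_{qf}(T)\setminus\rho_{sbf}(T)$; by the inclusion just proved, $\lambda_0\in\mbox{iso}\thinspace\sigma_{sf}(T)$, so there is $\epsilon>0$ with $B(\lambda_0,\epsilon)\cap\sigma_{sf}(T)=\{\lambda_0\}$. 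Since $\sigma_{sbf}(T)\subset\sigma_{sf}(T)$ and $\lambda_0\in\sigma_{sbf}(T)$, it follows that $B(\lambda_0,\epsilon)\cap\sigma_{sbf}(T)=\{\lambda_0\}$, i.e. $\lambda_0\in\mbox{iso}\thinspace\sigma_{sbf}(T)$. Hence $\sigma_{sbf}(T)=\sigma_{qf}(T)\cup\mbox{iso}\thinspace\sigma_{sbf}(T)$.

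The only delicate point is the very first step of the inclusion argument: that no connected component of $\rho_{qf}(T)$ can be swallowed entirely by $\sigma_{sbf}(T)$. This is precisely where $\mbox{int}\thinspace\sigma_{sbf}(T)=\emptyset$ is indispensable — without it a component of $\rho_{qf}(T)$ could fail to meet $\rho_{sbf}(T)$ and Theorem~\ref{theorem2} would not apply. Once that is secured, the rest is just bookkeeping with the decomposition $\Omega_{qf}=\Omega_{sbf}\cup E$ supplied by Theorem~\ref{theorem2}.
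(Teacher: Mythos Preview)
Your proof is correct and follows essentially the same route as the paper: both use the hypothesis $\mbox{int}\,\sigma_{sbf}(T)=\emptyset$ to guarantee that every component of $\rho_{qf}(T)$ meets $\rho_{sbf}(T)$, then invoke Theorem~\ref{theorem2} to get the decomposition $\Omega_{qf}=\Omega_{sbf}\cup E$ with $E\subset\mbox{iso}\,\sigma_{sf}(T)$. The only cosmetic difference is that the paper enumerates all components of $\rho_{qf}(T)$ at once and takes the union of the exceptional sets $E^{n}$, whereas you argue pointwise by chasing a single $\lambda\in\rho_{qf}(T)\setminus\rho_{sbf}(T)$ into its component; the content is identical, and your derivation of $\lambda_0\in\mbox{iso}\,\sigma_{sbf}(T)$ from $\lambda_0\in\mbox{iso}\,\sigma_{sf}(T)\cap\sigma_{sbf}(T)$ spells out a step the paper leaves implicit.
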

\begin{proof}
Let $\{\Omega_{qf} ^{n}\}_{n=1}^\infty$ be an enumeration of connected components of $\rho_{qf}(T)$. Since $\mbox{int} \thinspace  \sigma_{sbf}(T) = \emptyset$, for every connected component $\Omega_{qf} ^{n}$ of $\rho_{qf}(T)$ we have $\Omega_{qf} ^{n} \cap \rho_{sbf}(T) \neq \emptyset$. Using Theorem \ref{theorem2} for $\Omega_{qf} ^{n}$, there exists a unique connected component $\Omega_{sbf} ^{n}$ of $\rho_{sbf}(T)$ such that $\Omega_{qf} ^{n}$=$\Omega_{sbf} ^{n} \cup E^{n},$ where $E^{n} \subset \mbox{iso} \thinspace \sigma_{sf}(T)$. Let $E= \bigcup\limits_{n=1}^\infty E^{n}$, then $E$ is  at most countable and $E \subset \mbox{iso} \thinspace \sigma_{sf}(T)$. Also,
$$ \rho_{qf}(T)=\bigcup_{n=1}^\infty \Omega_{qf} ^{n}=\bigcup_{n=1}^\infty \Omega_{sbf} ^{n}  \cup E.$$ Since $\rho_{sbf}(T) \subset \rho_{qf}(T)$, $\rho_{qf}(T)=\rho_{sbf}(T) \cup E$. Let $E^{'}= E \cap \sigma_{sbf}(T)$.  Then $E^{'} \subset \mbox{iso} \thinspace \sigma_{sbf}(T)$ and $\rho_{qf}(T)=\rho_{sbf}(T) \cup E^{'}$.  This gives $\sigma_{sbf}(T)=\sigma_{qf}(T) \cup E^{'}$ which implies that $\sigma_{sbf}(T)=\sigma_{qf}(T) \cup \mbox{iso} \thinspace \sigma_{sbf}(T)$.
\end{proof}
Let $W_1$, $W_2$, $W_3$ and $W_4$ be the set of all bounded components of $\rho_{sbf}(T)$, $\rho_{qf}(T), \rho_{sf}(T)$ and $\rho_{\Gamma}(T)$, respectively.
\begin{theorem}\label{theorem3}
Let $T \in B(X)$. Then there exists an injective mapping $f:W_1 \rightarrow W_2$. Moreover, if $\mbox{int} \thinspace \sigma_{sbf}(T)=\emptyset$, then $f$ is also surjective.
\end{theorem}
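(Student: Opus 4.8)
The plan is to build $f$ directly from Corollary \ref{corollary1}. Given a bounded component $\Omega_{sbf}\in W_1$ of $\rho_{sbf}(T)$, Corollary \ref{corollary1} produces a unique connected component $\Omega_{qf}$ of $\rho_{qf}(T)$ with $\Omega_{qf}=\Omega_{sbf}\cup E$, where $E\subset\mbox{iso}\,\sigma_{sf}(T)$; I would set $f(\Omega_{sbf}):=\Omega_{qf}$. The first thing to verify is that $f$ really lands in $W_2$, i.e.\ that $\Omega_{qf}$ is bounded. This is where boundedness of the spectrum enters: since $\lambda I-T$ invertible implies $\lambda I-T$ semi-Fredholm, one has $\sigma_{sf}(T)\subset\sigma(T)$, so $\mbox{iso}\,\sigma_{sf}(T)$ is a bounded (indeed, at most countable) set; consequently $\Omega_{qf}=\Omega_{sbf}\cup E$ is bounded whenever $\Omega_{sbf}$ is. Hence $f$ is a well-defined map $W_1\to W_2$.

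For injectivity I would invoke the uniqueness clause of Theorem \ref{theorem2}. Suppose $f(\Omega_{sbf})=f(\Omega_{sbf}')=\Omega_{qf}$. By construction $\Omega_{qf}=\Omega_{sbf}\cup E$ and $\Omega_{qf}=\Omega_{sbf}'\cup F$ with $E,F\subset\mbox{iso}\,\sigma_{sf}(T)$; since $\Omega_{qf}\cap\rho_{sbf}(T)\supseteq\Omega_{sbf}\neq\emptyset$, Theorem \ref{theorem2} applies to $\Omega_{qf}$ and forces the component of $\rho_{sbf}(T)$ occurring in such a decomposition to be unique, whence $\Omega_{sbf}=\Omega_{sbf}'$. (Equivalently, and more hands-on: $\Omega_{sbf}\subset\Omega_{sbf}'\cup F$ with $\Omega_{sbf}$ nonempty open and $F$ at most countable forces $\Omega_{sbf}\cap\Omega_{sbf}'\neq\emptyset$, so these two components of $\rho_{sbf}(T)$ coincide.)

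For surjectivity under the extra hypothesis $\mbox{int}\,\sigma_{sbf}(T)=\emptyset$, I would take an arbitrary bounded component $\Omega_{qf}\in W_2$. Emptiness of $\mbox{int}\,\sigma_{sbf}(T)$ rules out $\Omega_{qf}\subset\sigma_{sbf}(T)$, so $\Omega_{qf}\cap\rho_{sbf}(T)\neq\emptyset$; Theorem \ref{theorem2} then yields a unique component $\Omega_{sbf}$ of $\rho_{sbf}(T)$ with $\Omega_{qf}=\Omega_{sbf}\cup E_0$, $E_0\subset\mbox{iso}\,\sigma_{sf}(T)$. Since $\Omega_{sbf}\subset\Omega_{qf}$ and $\Omega_{qf}$ is bounded, $\Omega_{sbf}\in W_1$; and because distinct components of $\rho_{qf}(T)$ are disjoint, the component of $\rho_{qf}(T)$ attached to $\Omega_{sbf}$ by Corollary \ref{corollary1} must be the one containing $\Omega_{sbf}$, namely $\Omega_{qf}$ itself. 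Thus $f(\Omega_{sbf})=\Omega_{qf}$ and $f$ is onto.

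I do not expect a serious obstacle: the substantive content is already in Theorem \ref{theorem2} and Corollary \ref{corollary1}, and what remains is bookkeeping. The point that must not be overlooked is the well-definedness step — one needs the ``extra'' isolated points $E$ not to push $\Omega_{qf}$ out to infinity — and this is exactly why the inclusion $\sigma_{sf}(T)\subset\sigma(T)$, hence boundedness of $\mbox{iso}\,\sigma_{sf}(T)$, is used. The only other mild subtlety, handled above, is confirming in the surjectivity argument that the component of $\rho_{qf}(T)$ produced from $\Omega_{sbf}$ coincides with the $\Omega_{qf}$ one started from.
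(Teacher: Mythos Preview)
Your proposal is correct and follows essentially the same route as the paper: define $f$ via Corollary~\ref{corollary1}, use boundedness of $\sigma_{sf}(T)$ to ensure $f$ lands in $W_2$, obtain injectivity from the uniqueness in Theorem~\ref{theorem2} (equivalently, the impossibility of a nonempty open set lying in an at most countable $F$), and for surjectivity use $\mbox{int}\,\sigma_{sbf}(T)=\emptyset$ together with Theorem~\ref{theorem2}. If anything, you are slightly more explicit than the paper in checking that the preimage $\Omega_{sbf}$ is bounded and that $f(\Omega_{sbf})$ recovers the given $\Omega_{qf}$, but the argument is the same.
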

\begin{proof}
Suppose that $\Omega \in W_1$. Using Corollary \ref{corollary1} we get a unique connected component $\Omega^{'}$ of $\rho_{qf}(T)$ such that  $\Omega^{'}= \Omega \cup E$, where $E \subset \mbox{iso} \thinspace \sigma_{sf}(T)$. Since $\mbox{iso} \thinspace \sigma_{sf}(T) \subset \sigma_{sf}(T)$,  $\Omega^{'}$ is bounded component of $\rho_{qf}(T)$ which implies that $\Omega^{'} \in W_2$. Define $f:W_1  \rightarrow W_2$ by $f(\Omega)=\Omega^{'}$. Then $f$ is a well defined mapping. We prove that $f$ is an  injective mapping. Let $\Omega_1$ and $\Omega_2$ be two distinct elements of $W_1$ such that $f(\Omega_1)=f(\Omega_2)$. This implies that there exists a component $\Omega^{'}$ of $\rho_{qf}(T)$ such that $\Omega^{'}= \Omega_1 \cup E =\Omega_2 \cup F$, where $E, F \subset  \mbox{iso} \thinspace \sigma_{sf}(T)$. As $\Omega_1 \cap \Omega_2 = \emptyset$, $\Omega_1 \subset  F$, a contradiction. Therefore, $f$ is an injective mapping.

Suppose that $\tau \in W_2$. Since $\mbox{int} \thinspace  \sigma_{sbf}(T)=\emptyset$, $\tau \cap \rho_{sbf}(T) \neq \emptyset$. Using Theorem \ref{theorem2} there exists a unique component $\tau^{'}$ of $\rho_{sbf}(T)$ such that $\tau=\tau^{'} \cup E$, where $E \subset \mbox{iso} \thinspace \sigma_{sf}(T)$. Therefore, $f( \tau^{'})=\tau$.
 \end{proof}
  Similarly, using \cite[Theorem 1, Corollary 1]{4} we establish the following result: 
  \begin{theorem}\label{theorem4}
Let $T \in B(X)$. Then there exists an injective mapping $g:W_3 \rightarrow W_4$. Moreover, if $\mbox{int} \thinspace \sigma_{sf}(T)=\emptyset$, then $g$ is also surjective.
\end{theorem}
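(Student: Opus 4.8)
The plan is to reproduce, essentially line for line, the argument of Theorem~\ref{theorem3}, with the pair $(\rho_{sbf}(T),\rho_{qf}(T))$ replaced throughout by $(\rho_{sf}(T),\rho_{\Gamma}(T))$ and the pair (Corollary~\ref{corollary1}, Theorem~\ref{theorem2}) replaced by (\cite[Corollary 1]{4}, \cite[Theorem 1]{4}). The structural facts that make this go through are: $\sigma_{\Gamma}(T)\subset\sigma_{sf}(T)$, so $\rho_{sf}(T)\subset\rho_{\Gamma}(T)$; and \cite[Theorem 1]{4}, which says that a connected component $\Omega_{\Gamma}$ of $\rho_{\Gamma}(T)$ meeting $\rho_{sf}(T)$ has the form $\Omega_{\Gamma}=\Omega_{sf}\cup E$ for a unique component $\Omega_{sf}$ of $\rho_{sf}(T)$ and some $E\subset\mbox{iso}\,\sigma_{sf}(T)$, while \cite[Corollary 1]{4} provides the dual statement that starts from a component of $\rho_{sf}(T)$.

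First I would build $g$. Given $\Omega\in W_3$, since $\Omega\subset\rho_{sf}(T)\subset\rho_{\Gamma}(T)$ there is a connected component $\Omega'$ of $\rho_{\Gamma}(T)$ with $\Omega\subset\Omega'$, and \cite[Corollary 1]{4} gives $\Omega'=\Omega\cup E$ with $E\subset\mbox{iso}\,\sigma_{sf}(T)$, this $\Omega'$ being the unique component of $\rho_{\Gamma}(T)$ of that form. Since $\mbox{iso}\,\sigma_{sf}(T)$ lies in the compact set $\sigma_{sf}(T)$ and $\Omega$ is bounded, $\Omega'$ is bounded, so $\Omega'\in W_4$; uniqueness makes $g(\Omega):=\Omega'$ well defined. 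For injectivity, if $\Omega_1\neq\Omega_2$ in $W_3$ with $g(\Omega_1)=g(\Omega_2)=:\Omega'$, then $\Omega'=\Omega_1\cup E=\Omega_2\cup F$ with $E,F\subset\mbox{iso}\,\sigma_{sf}(T)$; as distinct components are disjoint this forces $\Omega_1\subset F\subset\mbox{iso}\,\sigma_{sf}(T)$, impossible because $\Omega_1$ is a nonempty open set while $\mbox{iso}\,\sigma_{sf}(T)$ is totally disconnected (indeed at most countable).

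For the second assertion, assume $\mbox{int}\,\sigma_{sf}(T)=\emptyset$ and take $\tau\in W_4$. Since $\tau$ is a nonempty open set and $\sigma_{sf}(T)$ has empty interior, $\tau\not\subset\sigma_{sf}(T)$, hence $\tau\cap\rho_{sf}(T)\neq\emptyset$; then \cite[Theorem 1]{4} supplies a unique component $\tau'$ of $\rho_{sf}(T)$ with $\tau=\tau'\cup E$ and $E\subset\mbox{iso}\,\sigma_{sf}(T)$. As $\tau'\subset\tau$ it is bounded, so $\tau'\in W_3$, and by construction $g(\tau')=\tau$, proving $g$ onto. The argument is routine once \cite[Theorem 1]{4} and \cite[Corollary 1]{4} are in hand; the single point that deserves care — the same one that underlies Theorem~\ref{theorem3} — is that no nonempty open subset of $\mathbb{C}$ can sit inside $\mbox{iso}\,\sigma_{sf}(T)$, which is exactly what upgrades "agreement modulo a subset of $\mbox{iso}\,\sigma_{sf}(T)$" to genuine injectivity and what reduces the surjectivity of $g$ to the hypothesis $\mbox{int}\,\sigma_{sf}(T)=\emptyset$.
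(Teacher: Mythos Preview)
Your proposal is correct and is exactly the approach the paper intends: the paper's own ``proof'' consists of the single sentence ``Similarly, using \cite[Theorem 1, Corollary 1]{4} we establish the following result,'' i.e., rerun the proof of Theorem~\ref{theorem3} with $(\rho_{sf}(T),\rho_{\Gamma}(T))$ in place of $(\rho_{sbf}(T),\rho_{qf}(T))$ and with \cite[Theorem 1, Corollary 1]{4} playing the role of Theorem~\ref{theorem2} and Corollary~\ref{corollary1}. You have simply written out that argument in full, and each step is sound.
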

 \begin{theorem}
 Let $T \in B(X)$. Then every non isolated boundary point of $\sigma_{sbf}(T)$ belongs to $\sigma_{qf}(T)$.
 \end{theorem}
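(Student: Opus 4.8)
The plan is to argue by contradiction, with Theorem \ref{theorem2} as the only substantial input. Let $\lambda_0$ be a non-isolated boundary point of $\sigma_{sbf}(T)$. Since $\sigma_{sbf}(T)$ is closed we have $\lambda_0\in\sigma_{sbf}(T)$, and since $\lambda_0\in\partial\sigma_{sbf}(T)$ every neighbourhood of $\lambda_0$ meets $\rho_{sbf}(T)$. Suppose, contrary to the claim, that $\lambda_0\notin\sigma_{qf}(T)$, i.e.\ $\lambda_0\in\rho_{qf}(T)$. As $\rho_{qf}(T)$ is open there is $\varepsilon>0$ with $B(\lambda_0,\varepsilon)\subset\rho_{qf}(T)$.

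Next I would pin down the relevant component. Since $B(\lambda_0,\varepsilon)$ is connected and contained in $\rho_{qf}(T)$, it lies inside a single connected component $\Omega_{qf}$ of $\rho_{qf}(T)$; in particular $\lambda_0\in\Omega_{qf}$. Because $\lambda_0$ is a boundary point of $\sigma_{sbf}(T)$, we may choose $\mu\in B(\lambda_0,\varepsilon)\cap\rho_{sbf}(T)$, and then $\mu\in\Omega_{qf}\cap\rho_{sbf}(T)$, so $\Omega_{qf}\cap\rho_{sbf}(T)\neq\emptyset$. Hence Theorem \ref{theorem2} applies: there is a connected component $\Omega_{sbf}$ of $\rho_{sbf}(T)$ and a set $E_0\subset\mbox{iso}\thinspace\sigma_{sf}(T)$ with $\Omega_{qf}=\Omega_{sbf}\cup E_0$.

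Finally I would extract the contradiction. We have $\lambda_0\in\Omega_{qf}=\Omega_{sbf}\cup E_0$. Since $\lambda_0\in\sigma_{sbf}(T)$ while $\Omega_{sbf}\subset\rho_{sbf}(T)$, necessarily $\lambda_0\notin\Omega_{sbf}$, so $\lambda_0\in E_0\subset\mbox{iso}\thinspace\sigma_{sf}(T)$. Thus there is $\delta>0$ with $B(\lambda_0,\delta)\cap\sigma_{sf}(T)=\{\lambda_0\}$. As $\sigma_{sbf}(T)\subset\sigma_{sf}(T)$ and $\lambda_0\in\sigma_{sbf}(T)$, this forces $B(\lambda_0,\delta)\cap\sigma_{sbf}(T)=\{\lambda_0\}$, i.e.\ $\lambda_0\in\mbox{iso}\thinspace\sigma_{sbf}(T)$, contradicting that $\lambda_0$ is non-isolated in $\sigma_{sbf}(T)$. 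Therefore $\lambda_0\in\sigma_{qf}(T)$.

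There is no genuine obstacle here once Theorem \ref{theorem2} is available; the only points needing a little care are (a) observing that the whole disc $B(\lambda_0,\varepsilon)$, and not just $\lambda_0$, lies in one component $\Omega_{qf}$, so that a nearby point $\mu$ of $\rho_{sbf}(T)$ falls in the same component and makes Theorem \ref{theorem2} applicable, and (b) deducing $\lambda_0\in\mbox{iso}\thinspace\sigma_{sbf}(T)$ from $\lambda_0\in\mbox{iso}\thinspace\sigma_{sf}(T)$ together with $\lambda_0\in\sigma_{sbf}(T)$, which rests on the inclusion $\sigma_{sbf}(T)\subset\sigma_{sf}(T)$.
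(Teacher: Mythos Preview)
Your proof is correct and follows essentially the same route as the paper's: assume $\lambda_0\in\rho_{qf}(T)$, use a disc around $\lambda_0$ to show $\Omega_{qf}\cap\rho_{sbf}(T)\neq\emptyset$, invoke Theorem~\ref{theorem2} to write $\Omega_{qf}=\Omega_{sbf}\cup E_0$ with $E_0\subset\mbox{iso}\thinspace\sigma_{sf}(T)$, and derive a contradiction. The only cosmetic difference is in the final step: the paper argues that $\lambda_0\in\mbox{acc}\thinspace\sigma_{sbf}(T)\subset\mbox{acc}\thinspace\sigma_{sf}(T)$ forces $\lambda_0\notin E_0$, hence $\lambda_0\in\Omega_{sbf}$, contradicting $\lambda_0\in\sigma_{sbf}(T)$; you take the contrapositive direction, showing $\lambda_0\in E_0$ and then $\lambda_0\in\mbox{iso}\thinspace\sigma_{sbf}(T)$, which is logically the same.
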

 \begin{proof}
 Let $\lambda$ be a non isolated boundary point of $\sigma_{sbf}(T)$. Let $\lambda \in \rho_{qf}(T)$ and $\Omega_{qf}$ be the component of $\rho_{qf}(T)$ containing $\lambda$.  Then there exists $\epsilon > 0$ such that $B(\lambda, \epsilon) \subset \Omega_{qf}$. Since $\lambda$ is the boundary point of $\sigma_{sbf}(T)$, $B(\lambda, \epsilon) \cap \rho_{sbf}(T) \neq \emptyset$ which implies that $\Omega_{qf} \cap \rho_{sbf}(T) \neq \emptyset$. Therefore, by Theorem \ref{theorem2} there exists a component $\Omega_{sbf}$ of $\rho_{sbf}(T)$ such that $\Omega_{qf}=\Omega_{sbf} \cup E$, where $E \subset \mbox{iso} \thinspace \sigma_{sf}(T)$. Since $\lambda \in \Omega_{qf} \cap \mbox{acc} \thinspace \sigma_{sbf}(T) \subset \Omega_{qf} \cap \mbox{acc} \thinspace \sigma_{sf}(T) $ we deduce that $\lambda \in \Omega_{sbf}$, a contradiction. Therefore, $\lambda \in \sigma_{qf}(T)$.
 \end{proof}
 \begin{remark}\label{remark2}
 \emph{It is observed that if $P$ is a closed subset of $\mathbb{C}$ such that $\mbox{int} \thinspace P \neq \emptyset$ and $\mbox{int} \thinspace P^c \neq \emptyset$, then $(\partial P)^c$ is disconnected.}
 \end{remark}
 \begin{lemma}\label{lemma1}
 Let $T \in B(X)$, $\rho_{qf}(T)$ be connected and $\mbox{int} \thinspace \sigma_{qf}(T)=\emptyset$. Suppose that $P$ is a  closed set contained in $\sigma(T)$. Then  $\mbox{int} \thinspace P=\emptyset$.
 \end{lemma}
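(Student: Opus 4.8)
The plan is to establish the (equivalent) stronger statement $\mathrm{int}\,\sigma(T)=\emptyset$; the claim for an arbitrary closed $P\subseteq\sigma(T)$ then follows at once, since $\mathrm{int}\,P\subseteq\mathrm{int}\,\sigma(T)$. The tool I would use is the punctured neighbourhood theorem for operators of topological uniform descent: if $\lambda_{0}I-T$ has topological uniform descent, then there is an $\epsilon>0$ such that for every $\lambda$ with $0<|\lambda-\lambda_{0}|<\epsilon$ the operator $\lambda I-T$ is semi-regular and the two quantities $\alpha(\lambda I-T)$ and $\beta(\lambda I-T)$ are constant on that deleted disc (see \cite{1}). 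In particular this applies at every point of $\rho_{qf}(T)$, because $\rho_{qf}(T)\subseteq\rho_{\Gamma}(T)$.

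First I would fix notation. Since $\sigma_{qf}(T)\subseteq\sigma_{sf}(T)\subseteq\sigma(T)$, the set $\sigma_{qf}(T)$ is bounded, so $\rho_{qf}(T)$ contains $\{\lambda:|\lambda|>\|T\|\}$ and is, in particular, unbounded. For $\mu\in\rho_{qf}(T)$ write $a(\mu)$ and $b(\mu)$ for the common values of $\alpha(\lambda I-T)$ and $\beta(\lambda I-T)$ on a small deleted disc about $\mu$ furnished by the punctured neighbourhood theorem; a routine overlapping-discs argument shows that $\mu\mapsto a(\mu)$ and $\mu\mapsto b(\mu)$ are locally constant on $\rho_{qf}(T)$.

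Next comes the heart of the argument. As $\rho_{qf}(T)$ is connected, $a$ and $b$ are in fact constant on $\rho_{qf}(T)$; evaluating at a point $\mu$ with $|\mu|$ large, where $\lambda I-T$ is invertible for all $\lambda$ near $\mu$, gives $a\equiv b\equiv 0$. Hence for every $\mu\in\rho_{qf}(T)$ the operator $\lambda I-T$ is semi-regular with $\alpha(\lambda I-T)=\beta(\lambda I-T)=0$, i.e.\ invertible, for all $\lambda$ in a deleted disc about $\mu$; consequently $\mu\in\rho(T)\cup\mathrm{iso}\,\sigma(T)$. Thus $\rho_{qf}(T)\subseteq\rho(T)\cup\mathrm{iso}\,\sigma(T)$. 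Now $\mathrm{int}\,\sigma(T)$ is an open set which meets neither $\rho(T)$ (it lies inside $\sigma(T)$) nor $\mathrm{iso}\,\sigma(T)$ (each of its points is a non-isolated point of $\sigma(T)$), so $\mathrm{int}\,\sigma(T)\cap\rho_{qf}(T)=\emptyset$, that is, $\mathrm{int}\,\sigma(T)\subseteq\sigma_{qf}(T)$. Being open and contained in $\sigma_{qf}(T)$, $\mathrm{int}\,\sigma(T)\subseteq\mathrm{int}\,\sigma_{qf}(T)=\emptyset$; therefore $\mathrm{int}\,\sigma(T)=\emptyset$ and a fortiori $\mathrm{int}\,P=\emptyset$.

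The step I expect to be the main obstacle — and where the connectedness hypothesis is indispensable — is the passage from ``$a$ locally constant'' to ``$a\equiv 0$'': this needs the full strength of the punctured neighbourhood theorem (constancy of both nullity \emph{and} deficiency on a deleted disc, not merely semi-regularity), together with the fact that $\rho_{qf}(T)$ is a single connected set reaching out to $\infty$. For an operator such as the unilateral shift, $\rho_{qf}(T)$ has a bounded component on which the deficiency is identically $1$, and indeed $\mathrm{int}\,\sigma(T)\neq\emptyset$; so any correct argument must invoke connectedness precisely here. One can also repackage the conclusion through Remark~\ref{remark2}: if $\mathrm{int}\,\sigma(T)\neq\emptyset$ then $(\partial Q)^{c}$ is disconnected for $Q=\overline{\mathrm{int}\,\sigma(T)}$, whose boundary consists of non-isolated boundary points of $\sigma(T)$ and hence lies in $\sigma_{qf}(T)$, forcing $\rho_{qf}(T)\subseteq(\partial Q)^{c}$ to be disconnected; but verifying $\partial Q\subseteq\sigma_{qf}(T)$ rests once more on the same punctured neighbourhood theorem, so the direct route above is preferable.
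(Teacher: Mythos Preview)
Your argument is correct, and its backbone coincides with the paper's: under the hypotheses one shows that $\rho_{qf}(T)\subseteq \rho(T)\cup\mathrm{iso}\,\sigma(T)$ (equivalently, $\sigma(T)\cap\rho_{qf}(T)\subseteq\mathrm{iso}\,\sigma(T)$), using that $\rho_{qf}(T)$ is a single connected set containing $\rho(T)$. The differences lie in how that inclusion is obtained and how the conclusion is drawn from it.

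For the key inclusion, the paper does not invoke Grabiner's punctured-neighbourhood theorem directly; instead it quotes \cite[Theorems~3.6, 3.7]{5} to get $p(\lambda I-T)=q(\lambda I-T)<\infty$ on the (unique) component of $\rho_{qf}(T)$ that meets $\rho(T)$, yielding the slightly sharper identity $\rho_{qf}(T)=\rho(T)\cup\Pi(T)$. Your route via local constancy of $\alpha$ and $\beta$ on $\rho_{qf}(T)$ reaches the weaker $\rho_{qf}(T)\subseteq\rho(T)\cup\mathrm{iso}\,\sigma(T)$, which is all that is needed here, and it is more self-contained. For the conclusion, the paper argues by contradiction through Remark~\ref{remark2}: assuming $\mathrm{int}\,P\neq\emptyset$, density of $\rho_{qf}(T)$ together with connectedness forces $\rho_{qf}(T)\cap\mathrm{acc}\,\partial P\neq\emptyset$, and any such point would simultaneously lie in $\mathrm{iso}\,\sigma(T)$ and be an accumulation point of $\partial P\subseteq\sigma(T)$. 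Your ending is considerably shorter: $\mathrm{int}\,\sigma(T)$ is open and disjoint from $\rho(T)\cup\mathrm{iso}\,\sigma(T)$, hence $\mathrm{int}\,\sigma(T)\subseteq\sigma_{qf}(T)$ and so $\mathrm{int}\,\sigma(T)\subseteq\mathrm{int}\,\sigma_{qf}(T)=\emptyset$. In effect you bypass Remark~\ref{remark2} entirely; the paper's argument has the side benefit of identifying the isolated spectral points in $\rho_{qf}(T)$ as genuine poles, but that is not used in the lemma.
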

 \begin{proof}
 Suppose that $\mbox{int} \thinspace P \neq \emptyset$. First we prove that $\rho_{qf}(T) \cap \mbox{acc} \thinspace (\partial P) \neq \emptyset$. If $\rho_{qf}(T) \cap \mbox{acc} \thinspace (\partial P)= \emptyset$, then $$\rho_{qf}(T) \subset \mbox{iso} \thinspace (\partial P)\cup  (\partial P)^c \subset \overline{\rho_{qf}(T)}=\mathbb{C}.$$ Since $\rho_{qf}(T)$ is connected, $\mbox{iso} \thinspace (\partial P) \cup  (\partial P)^c$ is connected. Let $S=\mbox{iso} \thinspace (\partial P) \cup  (\partial P)^c.$ This implies that $(\partial P)^c= S \setminus \mbox{iso} \thinspace (\partial P)$ which gives $(\partial P)^c$ is connected. As $ \mbox{int} \thinspace P \neq \emptyset$ then by Remark \ref{remark2} we get a contradiction. Therefore, there exists $\lambda$ such that $\lambda \in \rho_{qf}(T) \cap \mbox{acc} \thinspace (\partial P)$. As $\rho_{qf}(T)$ is connected and $\rho(T) \subset \rho_{qf}(T)$, by \cite[Theorems  3.6, 3.7]{5} $p(\lambda I-T)=q(\lambda I-T) < \infty$ for all  $\lambda \in \rho_{qf}(T)$. Therefore, $(\lambda I-T)$ is drazin invertible for all  $\lambda \in \rho_{qf}(T)$. This gives $\rho_{qf}(T)=\rho(T)\cup \Pi(T)$, where $\Pi(T)$ denotes the set of  poles of the resolvent of $T$. Since $\lambda \in \mbox{acc} \thinspace (\partial P) \subset \sigma(T)$ which implies that $\lambda \in \Pi(T) \subset \mbox{iso} \thinspace \sigma(T)$. Then there exists an $\epsilon > 0$ such that $B(\lambda, \epsilon) \setminus \{\lambda\} \subset \rho(T)$. Since $\lambda \in \mbox{acc} \thinspace (\partial P)$, there exists $\mu \in B(\lambda, \epsilon) \cap \partial P \subset \rho(T) \cap \partial P$,  a contradiction. Hence,  $\mbox{int} \thinspace P =\emptyset$.
 \end{proof}
 If $\rho_{\Gamma}(T)$ is connected, then by \cite[Proposition 2]{4}  we know that $\rho_{\Gamma}(T)= \rho(T) \cup \Pi(T)$. Then proceeding likewise as in Lemma \ref{lemma1} we have the following result:
 \begin{lemma}\label{lemma2}
 Let $T \in B(X)$,  $\rho_{\Gamma}(T)$ be connected and $\mbox{int} \thinspace \sigma_{\Gamma}(T)=\emptyset$. Suppose that $P$ is closed set contained in $\sigma(T)$. Then  $\mbox{int} \thinspace P=\emptyset$.
 \end{lemma}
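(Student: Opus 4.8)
The plan is to run the argument of Lemma \ref{lemma1} almost word for word, with $\rho_{\Gamma}(T)$ in place of $\rho_{qf}(T)$ and with the identity $\rho_{\Gamma}(T) = \rho(T) \cup \Pi(T)$ (valid here by \cite[Proposition 2]{4}, since $\rho_{\Gamma}(T)$ is connected) replacing the appeal to \cite[Theorems 3.6, 3.7]{5}. So I would argue by contradiction and suppose $\mbox{int}\,P \neq \emptyset$. Since $P \subset \sigma(T)$ is closed and $\sigma(T)$ is compact, $P$ is compact, hence $P^c$ contains the unbounded component of $\mathbb{C}\setminus P$, and in particular $\mbox{int}\,P^c \neq \emptyset$.

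First I would show $\rho_{\Gamma}(T) \cap \mbox{acc}\,(\partial P) \neq \emptyset$. If this intersection were empty, then $\rho_{\Gamma}(T) \subset \mbox{iso}\,(\partial P) \cup (\partial P)^c =: S$. Because $\mbox{int}\,\sigma_{\Gamma}(T) = \emptyset$, the open set $\rho_{\Gamma}(T)$ is dense, so $\rho_{\Gamma}(T) \subset S \subset \mathbb{C} = \overline{\rho_{\Gamma}(T)}$, and therefore $S$ is connected. Moreover $S$ is open (a point of $\mbox{iso}\,(\partial P)$ has a punctured disc lying in $(\partial P)^c \subset S$), and deleting the at most countable set $\mbox{iso}\,(\partial P)$ from the open connected planar set $S$ leaves it connected; that is, $(\partial P)^c = S \setminus \mbox{iso}\,(\partial P)$ is connected. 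Since $\mbox{int}\,P \neq \emptyset$ and $\mbox{int}\,P^c \neq \emptyset$, Remark \ref{remark2} gives that $(\partial P)^c$ is disconnected, a contradiction. Hence I may pick $\lambda \in \rho_{\Gamma}(T) \cap \mbox{acc}\,(\partial P)$.

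Now $\partial P \subset P \subset \sigma(T)$ (as $P$ is closed), so $\lambda \in \mbox{acc}\,(\partial P) \subset \sigma(T)$, whence $\lambda \in \sigma(T) \cap \rho_{\Gamma}(T)$. By $\rho_{\Gamma}(T) = \rho(T) \cup \Pi(T)$ this forces $\lambda \in \Pi(T) \subset \mbox{iso}\,\sigma(T)$, so there is $\epsilon > 0$ with $B(\lambda,\epsilon)\setminus\{\lambda\} \subset \rho(T)$. But $\lambda \in \mbox{acc}\,(\partial P)$ yields some $\mu \in (B(\lambda,\epsilon)\setminus\{\lambda\}) \cap \partial P \subset \rho(T) \cap \sigma(T) = \emptyset$, a contradiction. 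Therefore $\mbox{int}\,P = \emptyset$. The only point that needs any care is the first step: checking that $\overline{\rho_{\Gamma}(T)} = \mathbb{C}$ (immediate from $\mbox{int}\,\sigma_{\Gamma}(T) = \emptyset$), that $S$ is consequently connected and open, and that removing at most countably many points from an open connected subset of the plane preserves connectedness; everything after that is a mechanical transcription of the proof of Lemma \ref{lemma1}.
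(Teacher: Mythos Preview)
Your proposal is correct and is exactly the approach the paper takes: the paper simply notes that $\rho_{\Gamma}(T)=\rho(T)\cup\Pi(T)$ by \cite[Proposition 2]{4} and then says ``proceeding likewise as in Lemma \ref{lemma1}'', which is precisely what you have done. Your write-up is in fact more careful than the paper's proof of Lemma \ref{lemma1} on a few minor points (verifying $\mbox{int}\,P^c\neq\emptyset$, noting that $S$ is open, and picking $\mu\neq\lambda$ explicitly).
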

\begin{theorem}\label{theorem8}
Let $T \in B(X)$. Then following statements are equivalent:

(i) $\rho_{sf}(T)$ is connected and int $\sigma_{sf}(T)=\emptyset$,

(ii) $\rho_{sbf}(T)$ is connected and int $\sigma_{sbf}(T)=\emptyset$,

(iii) $\rho_{qf}(T)$ is connected and int $\sigma_{qf}(T)=\emptyset$,

(iv) $\rho_{\Gamma}(T)$ is connected and  $\mbox{int} \thinspace \sigma_{\Gamma}(T)=\emptyset$.
\end{theorem}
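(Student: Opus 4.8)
The strategy is to prove the cyclic chain of implications $(i)\Rightarrow(ii)\Rightarrow(iii)\Rightarrow(iv)\Rightarrow(i)$, treating in each step the \emph{connectedness} assertion and the \emph{empty interior} assertion separately. The whole argument rests on the inclusions $\rho_{sf}(T)\subseteq\rho_{sbf}(T)\subseteq\rho_{qf}(T)\subseteq\rho_{\Gamma}(T)$, equivalently $\sigma_{\Gamma}(T)\subseteq\sigma_{qf}(T)\subseteq\sigma_{sbf}(T)\subseteq\sigma_{sf}(T)$, together with the recurring fact already exploited in the proof of Theorem \ref{theorem1}: in passing from any of these resolvent sets to a larger one we only adjoin an at most countable set of points, each of which is an isolated point of $\sigma_{sf}(T)$ and hence lies in the closure of the smaller set. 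Consequently, if $A\subseteq S\subseteq\overline{A}$ with $A$ connected then $S$ is connected, and dually, deleting an at most countable set from a connected planar domain leaves it connected.

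For the connectedness assertions, $(i)\Rightarrow(ii)$ is precisely Theorem \ref{theorem1}. For $(ii)\Rightarrow(iii)$: since $\mbox{int}\thinspace\sigma_{sbf}(T)=\emptyset$, Corollary \ref{corollary2} gives $\rho_{qf}(T)\setminus\rho_{sbf}(T)=\sigma_{sbf}(T)\setminus\sigma_{qf}(T)\subseteq\mbox{iso}\thinspace\sigma_{sbf}(T)\subseteq\overline{\rho_{sbf}(T)}$, so $\rho_{sbf}(T)\subseteq\rho_{qf}(T)\subseteq\overline{\rho_{sbf}(T)}$ and connectedness passes from $\rho_{sbf}(T)$ to $\rho_{qf}(T)$. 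For $(iii)\Rightarrow(iv)$: by \cite[Theorem 1]{4} one has $\rho_{\Gamma}(T)=\rho_{sf}(T)\cup E$ with $E\subseteq\mbox{iso}\thinspace\sigma_{sf}(T)$, whence $\rho_{\Gamma}(T)\setminus\rho_{qf}(T)\subseteq E\subseteq\overline{\rho_{sf}(T)}\subseteq\overline{\rho_{qf}(T)}$, so $\rho_{qf}(T)\subseteq\rho_{\Gamma}(T)\subseteq\overline{\rho_{qf}(T)}$ and connectedness again propagates. Finally, for $(iv)\Rightarrow(i)$: the same description gives $\rho_{sf}(T)=\rho_{\Gamma}(T)\setminus\mbox{iso}\thinspace\sigma_{sf}(T)$, so $\rho_{sf}(T)$ arises from the connected open set $\rho_{\Gamma}(T)$ by removing an at most countable set and stays connected.

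For the empty interior assertions, $(i)\Rightarrow(ii)\Rightarrow(iii)\Rightarrow(iv)$ are immediate since a subset of a set with empty interior has empty interior and $\sigma_{\Gamma}(T)\subseteq\sigma_{qf}(T)\subseteq\sigma_{sbf}(T)\subseteq\sigma_{sf}(T)$. The one step that needs genuine work --- and what I expect to be the main obstacle --- is recovering $\mbox{int}\thinspace\sigma_{sf}(T)=\emptyset$ in $(iv)\Rightarrow(i)$: here I would apply Lemma \ref{lemma2} with $P=\sigma_{sf}(T)$, a closed subset of $\sigma(T)$ (since $\rho(T)\subseteq\rho_{sf}(T)$), so that under the standing hypotheses of $(iv)$, namely $\rho_{\Gamma}(T)$ connected and $\mbox{int}\thinspace\sigma_{\Gamma}(T)=\emptyset$, the lemma delivers exactly $\mbox{int}\thinspace\sigma_{sf}(T)=\emptyset$. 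Everything else is bookkeeping with the two inclusion chains and the stability of connectedness of planar domains under adjoining or deleting at most countably many isolated points of $\sigma_{sf}(T)$. One could instead close the cycle by deducing $(i)$ directly from $(iii)$ using Lemma \ref{lemma1} in place of Lemma \ref{lemma2}, but the cyclic arrangement keeps the four conditions symmetric.
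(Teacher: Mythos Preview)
Your argument is correct and, for the hard step $(iv)\Rightarrow(i)$, coincides with the paper: Lemma~\ref{lemma2} applied to $P=\sigma_{sf}(T)$ for the empty interior, and removing $\mbox{iso}\,\sigma_{sf}(T)$ from the connected set $\rho_\Gamma(T)$ for connectedness.

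For the forward chain $(i)\Rightarrow(ii)\Rightarrow(iii)\Rightarrow(iv)$ you take a considerably longer route than the paper. The paper dismisses these implications in one line from the inclusions $\sigma_\Gamma(T)\subseteq\sigma_{qf}(T)\subseteq\sigma_{sbf}(T)\subseteq\sigma_{sf}(T)$, and the reason is this: at each step the hypothesis $\mbox{int}\,\sigma_*(T)=\emptyset$ already forces $\overline{\rho_*(T)}=\mathbb{C}$, so the sandwich $\rho_*(T)\subseteq\rho_{**}(T)\subseteq\overline{\rho_*(T)}$ holds trivially and connectedness passes up without invoking Theorem~\ref{theorem1}, Corollary~\ref{corollary2}, or \cite[Theorem~1]{4}. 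Your more structural approach identifies the difference between successive resolvent sets explicitly as a subset of $\mbox{iso}\,\sigma_{sf}(T)$; this is more informative but unnecessary here.

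One small point to tighten: in $(iii)\Rightarrow(iv)$ you assert $\rho_\Gamma(T)=\rho_{sf}(T)\cup E$ with $E\subseteq\mbox{iso}\,\sigma_{sf}(T)$ ``by \cite[Theorem~1]{4}''. That theorem is stated componentwise under the hypothesis that the component of $\rho_\Gamma(T)$ meets $\rho_{sf}(T)$; to apply it globally you need every component of $\rho_\Gamma(T)$ to meet $\rho_{sf}(T)$, which does hold under $(iii)$ (e.g.\ via Lemma~\ref{lemma1}, which gives $\mbox{int}\,\sigma_{sf}(T)=\emptyset$) but is not said. Since the density argument above makes the whole detour superfluous, this is easily repaired.
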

\begin{proof}
Since $\sigma_{\Gamma}(T) \subset \sigma_{qf}(T) \subset \sigma_{sbf}(T) \subset \sigma_{sf}(T)$, (i)$\Rightarrow$(ii)$\Rightarrow$(iii)$\Rightarrow$(iv) is obvious.
 Now suppose that $\rho_{\Gamma}(T)$ is connected and int $\sigma_{\Gamma}(T)=\emptyset$. Using Lemma \ref{lemma2} we deduce that $\mbox{int} \thinspace \sigma_{sf}(T)=\emptyset$. It remains to prove that $\rho_{sf}(T)$ is connected. As $\rho_{\Gamma}(T)$ is connected, by \cite[Theorem 1]{4} there exists a unique connected component $\Omega_{sf}$ of $\rho_{sf}(T)$ such that $\rho_{\Gamma}(T)= \Omega_{sf} \cup E_0$, where $E_0 \subset \mbox{iso} \thinspace \sigma_{sf}(T)$. This gives $\rho_{\Gamma}(T)=\rho_{sf}(T) \cup E_0$. This implies that $rho_{sf}(T)=\rho_{\Gamma}(T) \setminus E_0$. Therefore, $\rho_{sf}(T)$ is connected.
\end{proof}
\begin{lemma}\label{lemma4}
Let $T \in B(X)$. Then if $\rho_{sf}(T)$ consists of finite bounded components, then $\rho_{sbf}(T)$ consists of finite bounded components.
\end{lemma}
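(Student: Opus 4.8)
The plan is to produce an injective map $\phi\colon W_{1}\to W_{3}$ from the set of bounded components of $\rho_{sbf}(T)$ to the set of bounded components of $\rho_{sf}(T)$; since $W_{3}$ is assumed finite, this at once forces $W_{1}$ to be finite, which is exactly the assertion.

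First I would recall that $\sigma_{sf}(T)=\sigma_{sbf}(T)\cup\mbox{iso}\,\sigma_{sf}(T)$ by the first lemma of this section, so that $\rho_{sf}(T)=\rho_{sbf}(T)\setminus\mbox{iso}\,\sigma_{sf}(T)$ and in particular $\rho_{sf}(T)\subset\rho_{sbf}(T)$. Next, following the second half of the proof of Theorem~\ref{theorem1}, I would check that every bounded component $\Omega$ of $\rho_{sbf}(T)$ meets $\rho_{sf}(T)$: if not, then $\Omega\subset\sigma_{sf}(T)$, and since $\Omega$ is open and non-empty it cannot be contained in the at most countable set $\mbox{iso}\,\sigma_{sf}(T)$, a contradiction. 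Having fixed such a point $\lambda_{\Omega}\in\Omega\cap\rho_{sf}(T)$, I would let $\phi(\Omega)$ be the connected component of $\rho_{sf}(T)$ containing $\lambda_{\Omega}$. Because $\phi(\Omega)$ is connected, $\phi(\Omega)\subset\rho_{sf}(T)\subset\rho_{sbf}(T)$, and $\phi(\Omega)\cap\Omega\neq\emptyset$, maximality of components gives $\phi(\Omega)\subset\Omega$; as $\Omega$ is bounded, $\phi(\Omega)$ is a bounded component of $\rho_{sf}(T)$, i.e. $\phi(\Omega)\in W_{3}$, so $\phi$ is well defined. (Any choice of $\lambda_{\Omega}$ works, so no choice principle beyond the obvious is needed.)

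Finally I would verify injectivity: if $\Omega_{1},\Omega_{2}\in W_{1}$ are distinct, they are disjoint, being distinct components of the open set $\rho_{sbf}(T)$; hence if $\phi(\Omega_{1})=\phi(\Omega_{2})$ then this common component would lie in $\Omega_{1}\cap\Omega_{2}=\emptyset$, impossible since components are non-empty. Thus $\phi$ is injective and $|W_{1}|\le|W_{3}|<\infty$. I do not anticipate any real obstacle; the only point requiring a little care is the non-emptiness of $\Omega\cap\rho_{sf}(T)$, which, as above, reduces to the fact that a non-empty open subset of $\mathbb{C}$ cannot be swallowed by the countable set $\mbox{iso}\,\sigma_{sf}(T)$ — precisely the mechanism already used in Theorem~\ref{theorem1}.
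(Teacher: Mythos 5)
Your proof is correct and follows essentially the same route as the paper: both arguments use the mechanism from the proof of Theorem~\ref{theorem1} to place inside each bounded component of $\rho_{sbf}(T)$ a bounded component of $\rho_{sf}(T)$ (via the fact that a non-empty open set cannot lie in the countable set $\mbox{iso}\,\sigma_{sf}(T)$), and then observe that distinct components of $\rho_{sbf}(T)$ yield distinct components of $\rho_{sf}(T)$. Your write-up merely makes the resulting injection $W_1\to W_3$ explicit, which the paper leaves implicit.
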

\begin{proof}
Suppose that $\Omega_1$ and $\Omega_2$ are two distinct bounded components of  $\rho_{sbf}(T)$. Then by the proof of Theorem \ref{theorem1} we get bounded components $\Omega_{1} ^{'}$, $\Omega_{2} ^{'}$ of $\rho_{sf}(T)$ such that $\Omega_{1} ^{'} \subset \Omega_1$ and $\Omega_{2} ^{'} \subset \Omega_2$. This gives $\Omega_{1} ^{'} \cap \Omega_{2} ^{'}=\emptyset$ since if $\Omega_{1} ^{'} = \Omega_{2} ^{'}$, then $\Omega_1 \cap \Omega_2 \neq \emptyset$ which is a contradiction.
\end{proof}
\begin{remark}\label{remark1}
\emph{If $\mbox{int} \thinspace \sigma_{qf}(T)=\emptyset$ and $\Omega$ is bounded component of $\rho_{\Gamma}(T)$, then $\Omega \cap \rho_{qf}(T) \neq \emptyset$. Therefore, there exists a component $\Omega^{'}$ of $\rho_{qf}(T)$ such that $\Omega^{'} \subset \Omega$. From this we can conclude that for any two bounded distinct components of  $\rho_{\Gamma}(T)$ we get two distinct component of $\rho_{qf}(T)$. Hence, if $\rho_{qf}(T)$ consists of finite bounded components, then $\rho_{\Gamma}(T)$ consists of bounded components.}
\end{remark} 
\begin{theorem}\label{theorem9}
Let $T \in B(H)$ and $int \sigma_{p}(T)=\emptyset$. Then  following statements are equivalent:

(i) $\mbox{int} \thinspace  \sigma_{sf}(T)=\emptyset$ and $\rho_{sf}(T)$ consists of finite bounded components,

(ii) $\mbox{int} \thinspace \sigma_{sbf}(T)=\emptyset$ and $\rho_{sbf}(T)$ consists of finite bounded components,

(iii) $\mbox{int} \thinspace \sigma_{qf}(T)=\emptyset$ and $\rho_{qf}(T)$ consists of finite bounded components,

(iv) $\mbox{int} \thinspace \sigma_{\Gamma}(T)=\emptyset$ and $\rho_{\Gamma}(T)$ consists of finite bounded components.
\end{theorem}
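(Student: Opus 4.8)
The plan is to establish the cycle (i)$\Rightarrow$(ii)$\Rightarrow$(iii)$\Rightarrow$(iv)$\Rightarrow$(i). The first three implications should be short and, notably, use neither the Hilbert-space setting nor the hypothesis $\mbox{int} \thinspace \sigma_p(T)=\emptyset$: the statement about empty interior passes at once to the smaller spectrum along the chain $\sigma_{\Gamma}(T)\subset\sigma_{qf}(T)\subset\sigma_{sbf}(T)\subset\sigma_{sf}(T)$, while the statement about finitely many bounded components is already packaged in the comparison results of the previous section. Concretely, for (i)$\Rightarrow$(ii) I would invoke Lemma \ref{lemma4} (an injection of the bounded components of $\rho_{sbf}(T)$ into those of $\rho_{sf}(T)$); for (ii)$\Rightarrow$(iii), Theorem \ref{theorem3}, noting that $\mbox{int} \thinspace \sigma_{sbf}(T)=\emptyset$ upgrades the injection $f\colon W_1\to W_2$ to a bijection, so $|W_2|=|W_1|<\infty$; and for (iii)$\Rightarrow$(iv), Remark \ref{remark1}, which under $\mbox{int} \thinspace \sigma_{qf}(T)=\emptyset$ supplies an injection of the bounded components of $\rho_{\Gamma}(T)$ into those of $\rho_{qf}(T)$, so $|W_4|\le|W_2|<\infty$.

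The substance of the theorem is the implication (iv)$\Rightarrow$(i), and the key intermediate claim is that $\mbox{int} \thinspace \sigma_p(T)=\emptyset$ together with $\mbox{int} \thinspace \sigma_{\Gamma}(T)=\emptyset$ already forces $\mbox{int} \thinspace \sigma_{sf}(T)=\emptyset$. I would argue by contradiction: suppose an open disc $D$ satisfies $D\subset\sigma_{sf}(T)$. Since $\mbox{int} \thinspace \sigma_{\Gamma}(T)=\emptyset$, the set $\rho_{\Gamma}(T)$ is dense, so there is $\lambda_0\in D\cap\rho_{\Gamma}(T)$; thus $\lambda_0 I-T$ has topological uniform descent. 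The hypothesis $\mbox{int} \thinspace \sigma_p(T)=\emptyset$ gives that $T$ has SVEP, in particular at $\lambda_0$. By the localized-SVEP theory for operators with topological uniform descent (see \cite{1}; cf. the use of \cite[Theorems 3.6, 3.7]{5} in the proof of Lemma \ref{lemma1}), SVEP of $T$ at $\lambda_0$ together with topological uniform descent of $\lambda_0 I-T$ yields $\lambda_0\notin\mbox{acc} \thinspace \sigma_a(T)$. Hence either $\lambda_0\in\rho_a(T)$, in which case $\lambda_0 I-T$ is bounded below, in particular semi-Fredholm, so $\lambda_0\in\rho_{sf}(T)$, contradicting $\lambda_0\in D$; or $\lambda_0\in\mbox{iso} \thinspace \sigma_a(T)$, in which case there is $\epsilon>0$ with $B(\lambda_0,\epsilon)\subset D$ and $B(\lambda_0,\epsilon)\setminus\{\lambda_0\}\subset\rho_a(T)\subset\rho_{sf}(T)$, giving a nonempty subset of $D\cap\rho_{sf}(T)=\emptyset$, again a contradiction. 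Therefore $\mbox{int} \thinspace \sigma_{sf}(T)=\emptyset$.

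Once $\mbox{int} \thinspace \sigma_{sf}(T)=\emptyset$ is in hand, the remaining half of (i) follows from Theorem \ref{theorem4}: the injection $g\colon W_3\to W_4$ is then a bijection, so $|W_3|=|W_4|<\infty$ by (iv). This closes the cycle. I expect the main obstacle to be the precise justification (and citation) of the step that, from SVEP at $\lambda_0$ and topological uniform descent of $\lambda_0 I-T$, extracts $\lambda_0\notin\mbox{acc} \thinspace \sigma_a(T)$: the analogous facts in the paper (and in \cite{5}) are stated for points of $\rho_{qf}(T)$, so one must check that the version used genuinely applies at a point of $\rho_{\Gamma}(T)$. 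Everything else reduces to bookkeeping with the inclusion chain and with the component-counting maps of Lemma \ref{lemma4}, Theorem \ref{theorem3}, Remark \ref{remark1} and Theorem \ref{theorem4}.
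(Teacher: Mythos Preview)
Your proposal matches the paper's proof verbatim for (i)$\Rightarrow$(ii)$\Rightarrow$(iii)$\Rightarrow$(iv) (Lemma~\ref{lemma4}, Theorem~\ref{theorem3}, Remark~\ref{remark1}), and for (iv)$\Rightarrow$(i) the paper does exactly what you do in spirit: it notes that $\mbox{int}\,\sigma_p(T)=\emptyset$ gives SVEP, invokes \cite[Proposition]{4} to conclude $\mbox{int}\,\sigma_{sf}(T)=\emptyset$, and uses the injection of Theorem~\ref{theorem4} for the component count---you have simply unpacked the cited proposition by hand. Your flagged worry is unnecessary: the equivalence of SVEP at $\lambda_0$ with $\lambda_0\notin\mbox{acc}\,\sigma_a(T)$ is the classical Grabiner-type characterization for operators with topological uniform descent (see \cite{1}), so it genuinely applies at points of $\rho_\Gamma(T)$ and your inline argument goes through.
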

\begin{proof}
(i) $\Rightarrow$(ii) Follows directly from Lemma \ref{lemma4}.\\
(ii) $\Rightarrow$(iii)  Follows from Theorem \ref{theorem3}.\\
(iii) $\Rightarrow$(iv) Follows from Remark \ref{remark1}.\\
(iv) $\Rightarrow$(i) From Theorem \ref{theorem4} it follows  that $\rho_{sf}(T)$ consists of finite bounded components. Also, if $\mbox{int} \thinspace \sigma_{p}(T)=\emptyset$, then $T$ has SVEP. Therefore, by \cite[Proposition]{4} we have  $\mbox{int} \thinspace \sigma_{sf}(T)=\emptyset$.
\end{proof}
\section{Quasi-Fredholm spectrum and compact perturbations}
Let $K(X)$ denote the  ideal of all compact operators acting on a Banach space $X$. It is known that for $T \in B(X)$ and $K \in K(X)$, $$\sigma_{*}(T)=\sigma_{*}(T+K),$$ where $\sigma_{*}=\sigma_{sf}$ or $\sigma_{uw}$ or $\sigma_{w}$.  We start the following section with the following theorem:

\begin{theorem}\label{theorem11}
Let $T \in B(X)$ and $\rho_{qf}(T)$ be connected. Suppose that $K \in K(X)$.  Then $$\sigma(T+K)=\sigma_{qf}(T+K) \cup \Pi(T+K)\cup (\sigma_{sbf}(T+K) \setminus \sigma_{qf}(T+K)).$$ 
\end{theorem}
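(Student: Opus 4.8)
The plan is to start by simplifying the right-hand side. Since $\sigma_{qf}(T+K)\subseteq\sigma_{sbf}(T+K)$, the union $\sigma_{qf}(T+K)\cup\bigl(\sigma_{sbf}(T+K)\setminus\sigma_{qf}(T+K)\bigr)$ is exactly $\sigma_{sbf}(T+K)$, so the asserted identity is equivalent to $\sigma(T+K)=\sigma_{sbf}(T+K)\cup\Pi(T+K)$. The inclusion $\supseteq$ is immediate from $\sigma_{sbf}(T+K)\subseteq\sigma_{sf}(T+K)\subseteq\sigma(T+K)$ and $\Pi(T+K)\subseteq\sigma(T+K)$. Thus the whole task is to prove: \emph{if $\lambda\in\sigma(T+K)$ and $\lambda I-(T+K)$ is semi B-Fredholm, then $\lambda\in\Pi(T+K)$.}

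Next I would extract the structural consequences of the hypothesis and transfer them to $T+K$ via $\sigma_{sf}(T)=\sigma_{sf}(T+K)$. Since $\rho_{qf}(T)$ is connected and $\rho(T)$ is a nonempty subset of it, the argument used in the proof of Lemma \ref{lemma1} (namely \cite[Theorems 3.6, 3.7]{5}) shows that $\lambda I-T$ is Drazin invertible for every $\lambda\in\rho_{qf}(T)$, so $\rho_{qf}(T)=\rho(T)\cup\Pi(T)$ with $\Pi(T)$ a discrete, relatively closed subset of $\rho_{qf}(T)$ (the poles cannot accumulate inside $\rho_{qf}(T)$). Because $\sigma_{qf}(T)\subseteq\sigma_{sf}(T)\subseteq\sigma(T)$, one has $\rho_{sf}(T)=\rho_{qf}(T)\setminus\bigl(\Pi(T)\cap\sigma_{sf}(T)\bigr)$, i.e.\ $\rho_{sf}(T)$ is obtained from the connected open set $\rho_{qf}(T)$ by removing a discrete closed subset; hence $\rho_{sf}(T)$ is connected and unbounded. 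Moreover, for $\lambda\in\rho_{sf}(T)$ the operator $\lambda I-T$ is semi-Fredholm with finite ascent and descent, hence Browder, so $\mathrm{ind}(\lambda I-T)=0$; since the index is invariant under compact perturbations, $\mathrm{ind}(\lambda I-(T+K))=0$ for every $\lambda\in\rho_{sf}(T+K)=\rho_{sf}(T)$.

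I would then run classical semi-Fredholm theory for $T+K$ on the connected set $\rho_{sf}(T+K)$: this set contains a neighbourhood of $\infty$ on which $\lambda I-(T+K)$ is invertible, so $\alpha(\lambda I-(T+K))=0$ for all $\lambda\in\rho_{sf}(T+K)$ outside a discrete, relatively closed subset $D$, and, the index being $0$, $\lambda I-(T+K)$ is in fact invertible there; in particular $\rho_{sf}(T+K)\setminus D\subseteq\rho(T+K)$. Now take $\lambda\in\sigma(T+K)$ with $\lambda I-(T+K)$ semi B-Fredholm. By \cite[Theorem 1.117]{1} there is $\varepsilon>0$ with $\mu I-(T+K)$ semi-Fredholm and $\alpha(\mu I-(T+K))$ \emph{constant} on the punctured disc $B(\lambda,\varepsilon)\setminus\{\lambda\}$; since a nonempty open punctured disc cannot be contained in the discrete set $D$, it meets $\rho_{sf}(T+K)\setminus D\subseteq\rho(T+K)$, so that constant nullity is $0$, and combining with $\mathrm{ind}(\mu I-(T+K))=0$ we conclude $\mu I-(T+K)$ is invertible for all $\mu\in B(\lambda,\varepsilon)\setminus\{\lambda\}$. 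Hence $\lambda$ is an isolated point of $\sigma(T+K)$, so both $T+K$ and $(T+K)^*$ have SVEP at $\lambda$. Finally $\lambda I-(T+K)$ is quasi-Fredholm by \cite[Theorem 1.116]{1}, hence has topological uniform descent, so SVEP of $T+K$ at $\lambda$ gives $p(\lambda I-(T+K))<\infty$ and SVEP of $(T+K)^*$ at $\lambda$ gives $q(\lambda I-(T+K))<\infty$; therefore $\lambda I-(T+K)$ is Drazin invertible, and since $\lambda\in\sigma(T+K)$ this means $\lambda\in\Pi(T+K)$, completing the proof.

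The step I expect to be the main obstacle is the passage from ``$\lambda I-(T+K)$ semi B-Fredholm and $\lambda\in\sigma(T+K)$'' to ``$\lambda$ is isolated in $\sigma(T+K)$'': this is precisely where the connectedness of $\rho_{qf}(T)$ is used (through the connectedness of $\rho_{sf}(T+K)$, which pins the generic nullity of $\lambda I-(T+K)$ to $0$ on all of $\rho_{sf}(T+K)$), and it needs the full punctured-neighbourhood statement of \cite[Theorem 1.117]{1}, i.e.\ constancy of the nullity and not merely semi-Fredholmness, so that it can be matched against that generic value. A second point to verify carefully is the implication that topological uniform descent together with SVEP (respectively, with SVEP of the adjoint) forces finite ascent (respectively, finite descent), which should be available from the topological-uniform-descent results in \cite{1}.
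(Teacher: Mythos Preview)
Your argument is correct, but it follows a different path from the paper's own proof. The paper does not reduce to the simplified identity $\sigma(T+K)=\sigma_{sbf}(T+K)\cup\Pi(T+K)$; instead it stays at the level of the component structure of $\rho_{qf}(T+K)$. Concretely, after deducing (as you do) that $\rho_{sf}(T+K)$ is connected, the paper invokes Theorem~\ref{theorem1} to get that $\rho_{sbf}(T+K)$ is connected, then applies Corollary~\ref{corollary1} to obtain a single component $\Omega$ of $\rho_{qf}(T+K)$ with $\rho_{sbf}(T+K)\subset\Omega$; since $\rho(T+K)\subset\Omega$, \cite[Theorems 3.6, 3.7]{5} give directly $\Omega=\rho(T+K)\cup\Pi(T+K)$, and the identity drops out by writing $\rho_{qf}(T+K)=\Omega\cup(\rho_{qf}(T+K)\setminus\Omega)$ and observing $\rho_{qf}(T+K)\setminus\Omega\subset\sigma_{sbf}(T+K)\setminus\sigma_{qf}(T+K)$.

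What you do differently is bypass Theorem~\ref{theorem1} and Corollary~\ref{corollary1} entirely and work instead with classical semi-Fredholm theory on $\rho_{sf}(T+K)$: you transfer the index-zero information from $T$ to $T+K$ via compact-perturbation invariance, use the nullity-jump/discrete-set description of $\rho_{sf}(T+K)$, and then for each $\lambda\in\rho_{sbf}(T+K)\cap\sigma(T+K)$ invoke the punctured-neighbourhood theorem \cite[Theorem 1.117]{1} (with constancy of the nullity) to force $\lambda\in\mbox{iso}\,\sigma(T+K)$, finishing via SVEP and topological uniform descent. This buys you a self-contained argument that does not rely on the paper's component-matching results, at the price of a somewhat longer and more hands-on proof; the paper's route is shorter precisely because it cashes in the machinery built in Section~2. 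Your two flagged ``obstacles'' are genuine but both are standard: the constancy of $\alpha$ on the punctured disc is part of Grabiner's topological-uniform-descent perturbation theorem underlying \cite[Theorem 1.117]{1}, and the SVEP/TUD $\Rightarrow$ finite ascent (resp.\ descent) implications are exactly \cite[Theorems 3.6, 3.7]{5}.
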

\begin{proof}
Since $\rho_{qf}(T)$ is connected, $\rho_{qf}(T)=\rho(T) \cup \Pi(T)$ which implies that $\rho_{qf}(T)=\rho_{sbf}(T)$. Therefore, $\rho_{sbf}(T)$ is connected. Using Theorem \ref{theorem1} we get $\rho_{sf}(T)$ is connected. As $\rho_{sf}(T)=\rho_{sf}(T+K)$, $\rho_{sf}(T+K)$ is connected. Again using Theorem \ref{theorem1} $\rho_{sbf}(T+K)$ is connected.
By Corollary \ref{corollary1} there exists a component $\Omega_{qf}$ of $\rho_{qf}(T+K)$ such that  $$\Omega_{qf}=\rho_{sbf}(T+K) \cup E,$$ where $E \subset \mbox{iso} \thinspace \sigma_{sf}(T+K)$. Let $E_0=E \cap  \sigma_{sbf}(T+K)$. Then $\Omega_{qf}=\rho_{sbf}(T+K) \cup E_0$  and  $E_0 \subset \mbox{iso} \thinspace \sigma_{sbf}(T+K)$. As $\rho(T+K) \subset \rho_{sbf}(T+K) \subset \Omega$, by \cite[Theorems  3.6, 3.7]{5} we get $\Omega= \rho(T+K) \cup \Pi(T+K)$. Now $$\rho_{qf}(T+K)=\Omega \cup (\rho_{qf}(T+K)\cap \Omega^c).$$
As $\rho_{qf}(T+K)\cap \Omega^c = \sigma_{sbf}(T+K) \setminus \sigma_{qf}(T+K) $, $$\rho_{qf}(T+K)=\rho(T+K) \cup \Pi(T+K) \cup ( \sigma_{sbf}(T+K) \setminus \sigma_{qf}(T+K))$$ which gives $\sigma(T+K)=\sigma_{qf}(T+K) \cup \Pi(T+K)\cup (\sigma_{sbf}(T+K) \setminus \sigma_{qf}(T+K))$. 
\end{proof}
Denote 
$$\rho_{sbf} ^{+}(T)=\{\lambda \in \rho_{sbf}:\mbox{ind}(\lambda I-T)>0\}$$ and 
$$\rho_{sf} ^{+}(T)= \{ \lambda \in \rho_{sf}: \mbox{ind}(\lambda I-T)>0\}.$$
\begin{lemma}\label{lemma3}
Let $T \in B(X).$ Then $\rho_{sbf} ^{+}(T)=\emptyset$ if and only if $\rho_{sf} ^{+}(T)=\emptyset$.
\end{lemma}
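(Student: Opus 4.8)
The statement to prove is Lemma~\ref{lemma3}: for $T \in B(X)$, $\rho_{sbf}^{+}(T) = \emptyset$ if and only if $\rho_{sf}^{+}(T) = \emptyset$. One implication is immediate: since every semi-Fredholm operator is semi B-Fredholm (taking $n=0$ in the definition), we have $\rho_{sf}(T) \subset \rho_{sbf}(T)$, and on $\rho_{sf}(T)$ the B-Fredholm index coincides with the ordinary index; hence $\rho_{sf}^{+}(T) \subset \rho_{sbf}^{+}(T)$, so $\rho_{sbf}^{+}(T) = \emptyset$ forces $\rho_{sf}^{+}(T) = \emptyset$. The work is in the converse.

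For the converse I would argue by contraposition: assume $\rho_{sbf}^{+}(T) \neq \emptyset$ and produce a point of $\rho_{sf}^{+}(T)$. Pick $\lambda_0 \in \rho_{sbf}(T)$ with $\mbox{ind}(\lambda_0 I - T) > 0$. By \cite[Theorem 1.117]{1} (the punctured-neighbourhood property for semi B-Fredholm operators, already invoked in Lemma~1.1 of this paper), there is an $\epsilon > 0$ such that $\lambda I - T$ is semi-Fredholm for every $\lambda \in B(\lambda_0,\epsilon)\setminus\{\lambda_0\}$; that is, $B(\lambda_0,\epsilon)\setminus\{\lambda_0\} \subset \rho_{sf}(T)$. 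The key step is then to relate the constant index of $\lambda I - T$ on this punctured disc to the B-Fredholm index at $\lambda_0$. The standard fact here is that for a semi B-Fredholm operator $\lambda_0 I - T$ of degree $d$, the restriction $(\lambda_0 I - T)_{[d]}$ is semi-Fredholm and its index equals the common value $\mbox{ind}(\lambda I - T)$ for $\lambda$ in a punctured neighbourhood of $\lambda_0$ — this is precisely how the B-Fredholm index is shown to be well defined and stable in \cite{2} (Berkani's work on B-Fredholm operators). Granting this, the punctured disc lies in $\rho_{sf}(T)$ and there $\mbox{ind}(\lambda I - T) = \mbox{ind}(\lambda_0 I - T) > 0$, so any such $\lambda$ witnesses $\rho_{sf}^{+}(T) \neq \emptyset$.

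The main obstacle is the identification of the two notions of index across the puncture, i.e.\ justifying that the B-Fredholm index at $\lambda_0$ equals the ordinary semi-Fredholm index at nearby points. If one does not wish to quote Berkani's index stability directly, an alternative is to invoke \cite[Theorem 1]{4} on topological uniform descent: since $\rho_{sbf}(T) \subset \rho_{\Gamma}(T)$ and on $\rho_{sf}(T)$ the index is locally constant, one examines the component $\Omega_\Gamma$ of $\rho_{\Gamma}(T)$ containing $\lambda_0$, writes $\Omega_\Gamma = \Omega_{sf} \cup E$ with $E \subset \mbox{iso}\,\sigma_{sf}(T)$ as in the proof of Theorem~\ref{theorem2}, and transports the index from $\Omega_{sf}$ — on which it is constant and, by continuity/Berkani, agrees with the B-Fredholm value on the rest of $\Omega_\Gamma$. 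Either way, once the index match is in hand the conclusion is a one-line deduction. I would present the proof in the contrapositive form sketched above, citing \cite[Theorem 1.117]{1} for the punctured neighbourhood and \cite{2} for the index compatibility.
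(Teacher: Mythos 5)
Your proposal is correct and is essentially the paper's own argument: the easy inclusion $\rho_{sf}^{+}(T)\subset\rho_{sbf}^{+}(T)$ for one direction, and for the converse the punctured-neighbourhood theorem \cite[Theorem 1.117]{1} applied at a point of $\rho_{sbf}^{+}(T)$ (normalized to $0$ in the paper) to produce nearby semi-Fredholm points of the same positive index. The index compatibility you flag as the ``main obstacle'' is in fact part of the statement of \cite[Theorem 1.117]{1}, which is exactly how the paper handles it, so no further work is needed.
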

\begin{proof}
Evidently, if $\rho_{sbf} ^{+}(T)=\emptyset$, then  $\rho_{sf} ^{+}(T)=\emptyset.$ Conversely, suppose that $\rho_{sf} ^{+}(T)=\emptyset$. Without loss of generality, we may assume that $0 \in  \rho_{sbf} ^{+}(T)$ then $0 \in  \rho_{sbf}(T)$ and $\mbox{ind}(T)>0.$ Now by \cite[Theorem 1.117]{1}  there exists an $\epsilon >0$ such that $B(0,\epsilon) \setminus \{0\} \subset \rho_{sf}(T)$ and $\mbox{ind}(\lambda I-T)> 0$ for all $\lambda \in B(0,\epsilon)$,  a contradiction.
\end{proof}
The following result  is an immediate consequence of Lemma \ref{lemma3} and \cite[Theorem 1.1]{6}
\begin{theorem}\label{theorem5}
Let $T \in B(H)$, where $H$ is a Hilbert space. Then the following statements are equivalent:

(i) There exists $K \in K(H)$ such that $T+K$ has SVEP,

(ii) $\rho_{sbf} ^{+}(T)=\emptyset$,

(iii) $\rho_{sf} ^{+}(T)=\emptyset$.
\end{theorem}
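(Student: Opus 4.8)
The plan is to peel the triple equivalence into two independent pieces, each of which is a one-line consequence of an already available result; once the right citations are in place there is essentially nothing to compute. The equivalence of (ii) and (iii) is immediate, since it is precisely the content of Lemma \ref{lemma3}: $\rho_{sbf}^{+}(T)=\emptyset$ holds exactly when $\rho_{sf}^{+}(T)=\emptyset$. So the only real task is to tie either of these conditions to (i).

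For that I would invoke the compact-perturbation characterisation of Zhu and Li. In the notation of the present paper, \cite[Theorem 1.1]{6} states that for $T\in B(H)$ there exists $K\in K(H)$ with $T+K$ having SVEP if and only if $\rho_{sf}^{+}(T)=\emptyset$; quoting this gives (i) $\Leftrightarrow$ (iii) outright, and combining it with Lemma \ref{lemma3} closes the chain (i) $\Leftrightarrow$ (ii) $\Leftrightarrow$ (iii). That is the whole argument, exactly as announced before the statement.

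It is worth recording which half carries the weight. The implication (i) $\Rightarrow$ (iii) is elementary and does not really need \cite{6}: if $\mu\in\rho_{sf}^{+}(T)$ then $\mu I-T$ is semi-Fredholm with $\mbox{ind}(\mu I-T)>0$, and since the semi-Fredholm domain and the index are both invariant under compact perturbations, $\mu I-(T+K)$ is still semi-Fredholm with strictly positive index for every $K\in K(H)$; because a semi-Fredholm operator with strictly positive index never has SVEP at the corresponding point (see \cite{1}), $T+K$ fails SVEP at $\mu$, so (i) fails. All the substance therefore lies in the converse (iii) $\Rightarrow$ (i): actually manufacturing a compact $K$ that restores SVEP under the hypothesis $\rho_{sf}^{+}(T)=\emptyset$. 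This is the genuinely hard, Hilbert-space-specific step, and I would not reprove it — citing \cite[Theorem 1.1]{6} is the whole point of packaging the result this way.
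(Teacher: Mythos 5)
Your proposal matches the paper's own treatment exactly: the paper also obtains this theorem as an immediate consequence of Lemma \ref{lemma3} (for the equivalence of (ii) and (iii)) together with the Zhu--Li characterisation \cite[Theorem 1.1]{6} (for the equivalence of (i) and (iii)). Your added observation that (i) $\Rightarrow$ (iii) follows elementarily from the compact-perturbation invariance of the semi-Fredholm index is correct but not needed for the citation-based argument.
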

It is observed that if $T$ has SVEP at every point of $\rho_{qf}(T)$ then  $\rho_{qf}(T)$ need not be connected. The following example illustrates this fact:
\begin{example}
\emph{Let $R$ be unilateral shift on $l^2(\mathbb{N})$. It is  known that $\sigma_{a}(T)= S^{1}$, where $S^{1}$ denotes the unit circle. Therefore, $\partial \sigma_{a}(T) \cap \mbox{acc} \thinspace  \sigma_{a}(T)=S^1$.  By \cite[Corollary 3.6]{7} we have  $\sigma_{qf}(T)=\sigma_{usbb}(T)=S^1$. Hence, $T$ has SVEP at every point of $\rho_{qf}(T)$  but $\rho_{qf}(T)$ is not connected.}
\end{example}
\begin{theorem}\label{theorem6}
Let $T \in B(H)$, where $H$ is a Hilbert space. Then $T+K$ has SVEP at every point of $\rho_{qf}(T+K)$ for any $K \in K(H)$ if and only if $\rho_{qf}(T+K)$ is connected for any $K \in K(H)$.
\end{theorem}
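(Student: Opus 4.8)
The plan is to reduce the equivalence to the connectedness of $\rho_{sf}(T)$, using that $\sigma_{sf}$ and the Fredholm index are stable under compact perturbations together with the description of localized SVEP on the components of $\rho_{qf}$ in \cite[Theorems 3.6, 3.7]{5}. I would first record the elementary fact that for any $A\in B(H)$ the set $\rho_{qf}(A)$ is connected if and only if $\rho_{sf}(A)$ is: since $\sigma_{qf}(A)\subseteq\sigma_{sf}(A)$ and $\rho_{qf}(A)\setminus\rho_{sf}(A)\subseteq\mathrm{iso}\,\sigma_{sf}(A)$ (because $\rho_{qf}(A)\subseteq\rho_{\Gamma}(A)$ and $\rho_{\Gamma}(A)\setminus\rho_{sf}(A)\subseteq\mathrm{iso}\,\sigma_{sf}(A)$ by \cite[Theorem 1]{4}), each point of $\rho_{qf}(A)\setminus\rho_{sf}(A)$ is the centre of a punctured disc lying in a single component of $\rho_{sf}(A)$, so passing from $\rho_{sf}(A)$ to $\rho_{qf}(A)$ merely adjoins isolated points to already existing components and changes neither their number nor their connectedness. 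Because $\rho_{sf}(T+K)=\rho_{sf}(T)$ for every $K\in K(H)$, the right-hand side of the theorem is thus equivalent to the single statement that $\rho_{sf}(T)$ is connected.

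For the implication ``$\Leftarrow$'' I would argue as in Lemma \ref{lemma1}: if $\rho_{sf}(T)$ is connected, then $\rho_{sf}(T+K)=\rho_{sf}(T)$, and hence $\rho_{qf}(T+K)$, is connected for every $K$; since $\emptyset\neq\rho(T+K)\subseteq\rho_{qf}(T+K)$ supplies a point at which $T+K$ trivially has SVEP, \cite[Theorems 3.6, 3.7]{5} forces $p(\lambda I-(T+K))<\infty$, hence SVEP of $T+K$, at every point of $\rho_{qf}(T+K)$.

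For ``$\Rightarrow$'' I would prove the contrapositive. Assume $\rho_{qf}(T+K_0)$ is disconnected for some $K_0$ and put $S=T+K_0$; then $\rho_{sf}(S)=\rho_{sf}(T)$ is disconnected, so it has a bounded component $\Omega$, on which $\mathrm{ind}(\lambda I-T)$ is a constant $c$. If $c>0$, then $\Omega\subseteq\rho_{sf}^{+}(T)$ and $T$, having infinite ascent at each such point, fails SVEP on $\Omega\subseteq\rho_{sf}(T)\subseteq\rho_{qf}(T)$, so $K=0$ already witnesses the negation of the left-hand side. If $c\le 0$, the non-positive index leaves room to create nontrivial kernels: by a compact-perturbation construction of Apostol--Herrero type (the circle of ideas behind \cite[Theorem 1.1]{6}) one obtains $K\in K(H)$ with $\alpha(\lambda I-(T+K))\ge 1$ for every $\lambda$ in a nonempty open subset $G$ of $\Omega$. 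On the open dense subset of $G$ where this nullity attains its minimum $m\ge 1$, $\lambda\mapsto\lambda I-(T+K)$ is a holomorphic semi-Fredholm family of constant nullity $m$, so a standard argument furnishes a nonzero analytic function $f$ with $(\lambda I-(T+K))f(\lambda)=0$; hence $T+K$ fails SVEP on $G$. Since $G\subseteq\Omega\subseteq\rho_{sf}(T)=\rho_{sf}(T+K)\subseteq\rho_{qf}(T+K)$, the left-hand side fails, completing the contrapositive.

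The routine steps are the reformulation and the ``$\Leftarrow$'' direction. The real obstacle is the case $c\le 0$: one must manufacture, by a compact perturbation which of necessity preserves $\sigma_{sf}$ and the index, an operator whose null spaces are nontrivial throughout an open subset of a semi-Fredholm component of non-positive index. This is precisely the ``correction by compact perturbation'' circle of results in \cite{6}; the model cases, in which a rank-one change turns the unilateral, resp.\ bilateral, shift into $R\oplus R\oplus L$, resp.\ $R\oplus L$, illustrate why $\le 0$ is the correct threshold, since for positive index the kernels are forced and no construction is needed. One should also dispose beforehand of the trivial subcase in which $T$ itself already has nontrivial kernels on an open subset of $\Omega$; and note that the implication ``nullity $\ge 1$ on an open set $\Rightarrow$ failure of SVEP there'' uses only the holomorphic semi-Fredholm structure, so no boundary points of $\Omega$ intervene.
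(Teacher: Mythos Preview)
Your reduction rests on the claim that for every $A\in B(H)$ one has $\rho_{qf}(A)\setminus\rho_{sf}(A)\subseteq\mathrm{iso}\,\sigma_{sf}(A)$, and hence that $\rho_{qf}(A)$ is connected iff $\rho_{sf}(A)$ is. This is false. \cite[Theorem~1]{4} only describes those components of $\rho_{\Gamma}(A)$ which \emph{meet} $\rho_{sf}(A)$; it says nothing about components of $\rho_{\Gamma}(A)$ (or $\rho_{qf}(A)$) lying entirely inside $\sigma_{sf}(A)$, and such components do occur. For a concrete counterexample take $A=R^{(\infty)}\oplus L^{(\infty)}$ on $\bigoplus_{\mathbb{N}}l^{2}\oplus\bigoplus_{\mathbb{N}}l^{2}$, where $R$ and $L$ are the forward and backward unilateral shifts. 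For $|\lambda|<1$ the operator $\lambda I-A$ is semi-regular (hence quasi-Fredholm of degree $0$) with $\alpha(\lambda I-A)=\beta(\lambda I-A)=\infty$, so the whole open unit disc lies in $\rho_{qf}(A)\cap\sigma_{sf}(A)$ and certainly not in $\mathrm{iso}\,\sigma_{sf}(A)$. Here $\rho_{sf}(A)=\{|\lambda|>1\}$ is connected while $\rho_{qf}(A)=\mathbb{C}\setminus S^{1}$ is not. Consequently both directions of your argument collapse: in ``$\Leftarrow$'' you cannot pass from connectedness of $\rho_{sf}(T+K)$ to connectedness of $\rho_{qf}(T+K)$, and in ``$\Rightarrow$'' the first step ``$\rho_{qf}(T+K_{0})$ disconnected $\Rightarrow$ $\rho_{sf}(T)$ disconnected'' fails, so you never obtain a bounded component of $\rho_{sf}(T)$ on which to run the index dichotomy.

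The paper avoids this trap by working directly with a bounded component $\Omega$ of $\rho_{qf}(T+K)$. The only feature of $\Omega$ that is actually needed is $\partial\Omega\subset\sigma_{qf}(T+K)\subset\sigma_{sf}(T+K)$; this is enough to invoke the compact-perturbation construction of \cite[Corollary~4]{4}, which produces compact $K_{1},K_{2}$ and an upper-triangular representation of $T+K+K_{1}$ whose $(1,1)$ corner $A+K_{2}$ is Weyl but not invertible throughout $\Omega$. The SVEP hypothesis, applied first to $T+K$ and then to $T+K+K_{1}$ via \cite[Theorem~3.6]{5} and \cite[Theorem~3.36]{8}, forces a point $\mu\in\Omega$ at which $\mu I-(T+K+K_{1})$ is bounded below, contradicting the non-invertibility of $\mu I-(A+K_{2})$. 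Note finally that the implication ``$\Leftarrow$'' needs no reduction at all: if $\rho_{qf}(T+K)$ is connected it contains $\rho(T+K)\neq\emptyset$, and \cite[Theorem~3.6]{5} gives SVEP on the whole component immediately.
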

\begin{proof}
Suppose that $T+K$ has SVEP at every point of $\rho_{qf}(T+K)$ for any $K \in K(H)$. Let $\rho_{qf}(T+K)$ is not connected for some $K \in K(H)$. Then we can find a bounded connected component $\Omega$ of $\rho_{qf}(T+K)$. Now $\partial \Omega \subset \sigma_{qf}(T+K) \subset \sigma_{sf}(T+K)$. Then the proof of \cite[corollary 4]{4} shows that there exist compact operators $K_1$ and $K_2$ such that   \[ 
  T+K+K_1=\left( {\begin{array}{cc}
   A+K_2 & * \\
  0 & B \\
  \end{array} } \right)\left( \begin{array}{c}
  H_1\\
  H_1^{\perp}\\
  \end{array} \right)
  \]
where $A$ is a normal operator and  $\lambda I-(A+K_2)$ is Weyl but not invertible operator for any $\lambda \in \Omega$. Since $T+K$ has SVEP at every point of $\rho_{qf}(T+K)$, $T+K$ has SVEP at $\Omega$. Then  by \cite[Theorem 3.6]{5} we have $$\Omega \subset \rho_{a}(T+K) \cup \mbox{iso} \thinspace \sigma_{a}(T+K).$$ Therefore, $\Omega \cap \rho_{a}(T+K) \neq \emptyset$ which implies that $\Omega \cap \rho_{sf}(T+K) \neq \emptyset$.  Let $\lambda \in \Omega \cap \rho_{sf}(T+K)$ and $\Omega^{'}$ be a connected component of $\rho_{sf}(T+K)$ contaning $\lambda$. Then  $\Omega^{'} \subset \Omega$. Since $T+K+K_1$ has SVEP at every point of $\rho_{qf}(T+K+K_1)$,  $T+K+K_1$ has SVEP at every point of  $\rho_{sf}(T+K+K_1)$. Using \cite[Theorem 3.36]{8} we have
 $$\Omega^{'} \subset \rho_{a}(T+K+K_1)\cup \mbox{iso} \thinspace \sigma_{a}(T+K+K_1).$$  This gives $\Omega^{'} \cap \rho_a(T+K+K_1) \neq \emptyset$. Therefore, there exists $ \mu \in \Omega^{'} \subset \Omega$ such that $\mu I-(T+K+K_1)$ is bounded below which implies that $\mu I-(A+K_2)$ is bounded below, a contradiction.
 Conversely, suppose that $\rho_{qf}(T+K)$ is connected for any $K \in K(H)$. Then by \cite[Theorem 3.6]{5}, we know that $T+K$ has SVEP at every point of $\rho_{qf}(T+K)$.
\end{proof}
The following result follows from \cite[Proposition 6, Corollary 4]{4} and  Theorem \ref{theorem8}.
\begin{theorem}\label{theorem7}
Let $T \in B(H)$, where $H$ is a Hilbert space. Then following statements are equivalent:

(i) $T+K$ has SVEP for any $K \in K(H)$,

(ii) $T^{*} +K$ has SVEP for any $K \in K(H)$,

(iii) $\rho_{sf}(T)$ is connected and $\mbox{int} \thinspace \sigma_{sf}(T)=\emptyset$,

(iv) $\rho_{sbf}(T)$ is connected and $\mbox{int} \thinspace \sigma_{sbf}(T)=\emptyset$,

(v) $\rho_{qf}(T)$ is connected and $\mbox{int} \thinspace \sigma_{qf}(T)=\emptyset$,

(vi) $\rho_{\Gamma}(T)$ is connected and $\mbox{int} \thinspace \sigma_{\Gamma}(T)=\emptyset$,

(vii) $\rho_{sf}(T+K)$ is connected and $\mbox{int} \thinspace \sigma_{sf}(T+K)=\emptyset$ for any $K \in K(H)$,

(viii) $\rho_{sbf}(T+K)$ is connected and $\mbox{int} \thinspace \sigma_{sbf}(T+K)=\emptyset$ for any $K \in K(H)$,

(ix) $\rho_{qf}(T+K)$ is connected and $\mbox{int} \thinspace \sigma_{qf}(T+K)=\emptyset$ for any $K \in K(H)$,

(x) $\rho_{\Gamma}(T+K)$ is connected and $\mbox{int} \thinspace \sigma_{\Gamma}(T+K)=\emptyset$.
\end{theorem}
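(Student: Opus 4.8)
The plan is to organize the ten statements into a cycle, using as hubs the two purely spectral conditions already shown equivalent in Theorem \ref{theorem8} (namely (iii), (iv), (v), (vi)) and the compact-perturbation invariance of $\sigma_{sf}$. The backbone will be:
\[
\text{(i)} \Longleftrightarrow \text{(iii)} \Longleftrightarrow \text{(iv)} \Longleftrightarrow \text{(v)} \Longleftrightarrow \text{(vi)},
\]
then tie in (ii) via a duality/transpose argument, and finally tie in (vii)--(x) by noting that the $K$-perturbed versions are equivalent to the unperturbed ones because $\sigma_{sf}(T)=\sigma_{sf}(T+K)$ and hence $\rho_{sf}(T)$ is connected with empty-interior complement iff the same holds for $T+K$.

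First I would establish (iii) $\Leftrightarrow$ (iv) $\Leftrightarrow$ (v) $\Leftrightarrow$ (vi) by direct appeal to Theorem \ref{theorem8}, so no new work is needed there. Next, the heart of the matter is (i) $\Leftrightarrow$ (iii). For the implication (iii) $\Rightarrow$ (i): if $\rho_{sf}(T)$ is connected and $\mbox{int}\,\sigma_{sf}(T)=\emptyset$, then since $\sigma_{sf}(T)=\sigma_{sf}(T+K)$ the same holds for $T+K$; by \cite[Proposition 6]{4} (the cited proposition of Shi characterizing SVEP of $T$ in terms of connectedness of $\rho_{sf}(T)$ together with $\mbox{int}\,\sigma_{sf}(T)=\emptyset$), $T+K$ has SVEP. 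For the converse (i) $\Rightarrow$ (iii): suppose $T+K$ has SVEP for every $K\in K(H)$; in particular $T$ itself ($K=0$) has SVEP, so by the same proposition of Shi one of the two defining conditions is automatic, and to get the other one I would argue by contradiction using \cite[Corollary 4]{4}: if $\rho_{sf}(T)$ were not connected, pick a bounded component $\Omega$ and apply the matrix-model construction from the proof of \cite[Corollary 4]{4} (as already used in the proof of Theorem \ref{theorem6}) to produce a compact $K_1$ with $T+K_1$ having a triangular form whose corner $A+K_2$ is Weyl-but-not-invertible on $\Omega$; SVEP of $T+K_1$ at points of $\Omega\cap\rho_{sf}(T+K_1)$ would then force an invertible point in $\Omega$, contradicting the construction. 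The remaining subtlety is to rule out $\mbox{int}\,\sigma_{sf}(T)\neq\emptyset$ under hypothesis (i); again via \cite[Proposition 6]{4}, SVEP of $T$ already forces $\mbox{int}\,\sigma_{sf}(T)=\emptyset$, so this is immediate.

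For statement (ii), the plan is to invoke the duality $\sigma_{sf}(T^*)=\sigma_{sf}(T)$ and $\mbox{ind}(T^*)=-\mbox{ind}(T)$, so that (iii) for $T$ is equivalent to (iii) for $T^*$; since $(T+K)^*=T^*+K^*$ and $K^*$ ranges over all compact operators on $H^*$ as $K$ does, the equivalence (i) $\Leftrightarrow$ (iii) applied to $T^*$ gives (ii) $\Leftrightarrow$ (iii). Finally, statements (vii)--(x) follow because $\rho_{sf}(T+K)=\rho_{sf}(T)$ for all $K$, hence (vii) holds iff (iii) holds, and then the already-established equivalences (iii) $\Leftrightarrow$ (iv) $\Leftrightarrow$ (v) $\Leftrightarrow$ (vi) — applied to $T+K$ in place of $T$ — deliver (viii), (ix), (x); the ``for any $K$'' quantifier is harmless since the condition is $K$-independent. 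The main obstacle I anticipate is the careful bookkeeping in the (i) $\Rightarrow$ ``$\rho_{sf}(T)$ connected'' step: one must make sure the compact operator $K_1$ produced by the \cite[Corollary 4]{4} construction is chosen so that SVEP of $T+K_1$ (guaranteed by hypothesis (i)) genuinely contradicts the Weyl-not-invertible corner, which requires tracking SVEP down from $\rho_{qf}$ to $\rho_{sf}$ exactly as in the proof of Theorem \ref{theorem6} via \cite[Theorem 3.36]{8} and \cite[Theorem 3.6]{5}.
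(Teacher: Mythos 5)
Your overall architecture --- Theorem \ref{theorem8} for (iii)$\Leftrightarrow$(iv)$\Leftrightarrow$(v)$\Leftrightarrow$(vi), Shi's results from \cite{4} for (i) and (ii), and the invariance $\sigma_{sf}(T)=\sigma_{sf}(T+K)$ to fold in (vii)--(x) --- is exactly the route the paper takes (its proof is literally a citation of \cite[Proposition 6, Corollary 4]{4} together with Theorem \ref{theorem8}). Your treatment of (ii) by duality and of (vii)--(x) by routing everything through the $\sigma_{sf}$-conditions and then applying Theorem \ref{theorem8} to $T+K$ is correct; in particular you are right not to pretend that $\sigma_{qf}$ or $\sigma_{\Gamma}$ themselves are stable under compact perturbation. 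The direction (iii)$\Rightarrow$(i) as you argue it is also sound: on a connected $\rho_{sf}(T+K)$ the index is constantly $0$ and the nullity is locally constant off a discrete set, so $\mbox{int}\thinspace\sigma_p(T+K)=\emptyset$ and SVEP follows.

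There is, however, one step in your fleshed-out version of (i)$\Rightarrow$(iii) that is wrong: you assert that SVEP of $T$ alone (the case $K=0$) ``already forces $\mbox{int}\thinspace\sigma_{sf}(T)=\emptyset$.'' That implication is false. The normal operator $M_z$ of multiplication by $z$ on $L^2(\mathbb{D})$ has SVEP (every normal operator does), yet for each $\lambda$ in the closed disc $\lambda I-M_z$ is injective with dense, non-closed range, hence $\alpha=0$, the range is not closed, and $\beta=\infty$, so $\lambda I-M_z$ is not semi-Fredholm; thus $\mbox{int}\thinspace\sigma_{sf}(M_z)\supset\mathbb{D}\neq\emptyset$. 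Consequently your gloss of \cite[Proposition 6]{4} as a characterization of SVEP of a single operator cannot be what that result says. The empty-interior half of (i)$\Rightarrow$(iii) genuinely needs the full quantifier over all compact $K$: if $\mbox{int}\thinspace\sigma_{sf}(T)\neq\emptyset$ one must \emph{construct} a compact $K$ for which $\sigma_p(T+K)$ has nonempty interior (so $T+K$ fails SVEP), using the same Apostol-type triangular-model machinery that you correctly invoke for the connectedness half; this is precisely what \cite[Corollary 4]{4} supplies. With that step repaired (or simply delegated to the cited corollary in both of its halves, as the paper does), the rest of your argument goes through.
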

The following result is consequence of \cite[Theorem 1.2]{6} and Theorem \ref{theorem9}.
\begin{theorem}\label{theorem10}
Let $T \in B(H)$, where $H$ is a Hilbert space. If

(i) $\mbox{int} \thinspace \sigma_{p}(T)=\emptyset$,

(ii) $\mbox{int} \thinspace \sigma_{*}(T)=\emptyset$,

(iii) $\rho_{*}(T)$ consists of finite bounded components,\\
where $\sigma_{*}$,$\rho_{*}=\sigma_{sf}$, $\rho_{sf}$ or $\sigma_{sbf}$, $\rho_{sbf}$ or $\sigma_{qf}$, $\rho_{qf}$ or $\sigma_{\Gamma}$, $\rho_{\Gamma}$. Then there exists $\delta >0$ such that $T+K$ has SVEP for all $K \in K(H)$ with $\Vert K \Vert < \delta$. 
\end{theorem}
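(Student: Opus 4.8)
The plan is to reduce all four instances of the statement to the single semi-Fredholm case and then invoke the known perturbation result for that case. Fix one of the four choices of the pair $(\sigma_*,\rho_*)$ among $(\sigma_{sf},\rho_{sf})$, $(\sigma_{sbf},\rho_{sbf})$, $(\sigma_{qf},\rho_{qf})$, $(\sigma_{\Gamma},\rho_{\Gamma})$, and assume $T\in B(H)$ satisfies (i)--(iii) for that choice. Since $\mbox{int}\,\sigma_p(T)=\emptyset$ is exactly hypothesis (i), the standing assumption of Theorem \ref{theorem9} is met, so its four clauses are mutually equivalent. Consequently, conditions (ii) and (iii) for the chosen pair are equivalent to the assertion that $\mbox{int}\,\sigma_{sf}(T)=\emptyset$ and that $\rho_{sf}(T)$ consists of finitely many bounded components; that is, whichever of $\sigma_{sf},\sigma_{sbf},\sigma_{qf},\sigma_{\Gamma}$ appears in the hypothesis, we arrive at the same structural conclusion about $\sigma_{sf}(T)$ and $\rho_{sf}(T)$.

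Next I would combine this with (i) and feed the resulting data --- namely $\mbox{int}\,\sigma_p(T)=\emptyset$, $\mbox{int}\,\sigma_{sf}(T)=\emptyset$, and $\rho_{sf}(T)$ having finitely many bounded components --- into \cite[Theorem 1.2]{6}, which asserts precisely that under these hypotheses there exists $\delta>0$ such that $T+K$ has SVEP for every $K\in K(H)$ with $\Vert K\Vert<\delta$. This $\delta$ is the one claimed in the theorem, so the argument closes.

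The only step with content is the passage through Theorem \ref{theorem9}; everything after it is a direct citation. The two bookkeeping points to be careful about are that Theorem \ref{theorem9} carries $\mbox{int}\,\sigma_p(T)=\emptyset$ as a hypothesis, so its equivalences may be invoked only after recording assumption (i), and that \cite[Theorem 1.2]{6} is a Hilbert-space statement, matching the present setting $T\in B(H)$. I do not anticipate a genuine obstacle once these are noted.
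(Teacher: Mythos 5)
Your proposal is correct and follows exactly the route the paper takes: the paper's entire proof is the one-line remark that the theorem is a consequence of Theorem \ref{theorem9} and \cite[Theorem 1.2]{6}, and you have simply made explicit the reduction of each of the four cases to the semi-Fredholm case via the equivalences of Theorem \ref{theorem9} (correctly noting that hypothesis (i) is what licenses invoking that theorem) before citing the Zhu--Li perturbation result.
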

\begin{theorem}
Let $T \in B(H)$, where $H$ is Hilbert space. If $\sigma_{qf}(T)=\emptyset$, then  $$\sigma(T+K)=\mbox{iso} \thinspace \sigma_{sbf}(T+K) \cup \Pi(T+K)$$ for any compact operator $K \in K(X)$.
\end{theorem}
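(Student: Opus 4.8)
The plan is to derive the identity from Theorem~\ref{theorem11} after first recording that $\sigma_{sbf}(T+K)$ is a finite set. The hypothesis $\sigma_{qf}(T)=\emptyset$ says precisely that $\rho_{qf}(T)=\mathbb{C}$, which is (trivially) connected, so all the hypotheses of Theorem~\ref{theorem11} hold for $T$ and every $K\in K(H)$. Since $\sigma_{qf}(T+K)\subseteq\sigma_{sbf}(T+K)$, the union on the right-hand side of that theorem collapses and its conclusion reads
$$\sigma(T+K)=\sigma_{sbf}(T+K)\cup\Pi(T+K).$$
Thus it suffices to prove $\sigma_{sbf}(T+K)=\mbox{iso}\,\sigma_{sbf}(T+K)$, and for this it is enough to show that $\sigma_{sbf}(T+K)$ is finite.

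To get finiteness, I would reuse the reasoning from the proof of Lemma~\ref{lemma1}: since $\rho_{qf}(T)$ is connected and $\rho(T)\subseteq\rho_{qf}(T)$, \cite[Theorems 3.6, 3.7]{5} give that $\lambda I-T$ is Drazin invertible for every $\lambda\in\rho_{qf}(T)=\mathbb{C}$, so $\sigma_d(T)=\emptyset$ and hence $\sigma(T)=\Pi(T)\subseteq\mbox{iso}\,\sigma(T)$. A compact subset of $\mathbb{C}$ all of whose points are isolated is finite, so $\sigma(T)$ is finite, and therefore so is $\sigma_{sf}(T)\subseteq\sigma(T)$. By the compact-perturbation invariance of the semi-Fredholm spectrum, $\sigma_{sf}(T+K)=\sigma_{sf}(T)$, and since $\sigma_{sbf}(T+K)\subseteq\sigma_{sf}(T+K)$ we conclude that $\sigma_{sbf}(T+K)$ is finite. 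A finite subset of $\mathbb{C}$ coincides with its set of isolated points, so $\sigma_{sbf}(T+K)=\mbox{iso}\,\sigma_{sbf}(T+K)$, and substituting this into the displayed identity yields $\sigma(T+K)=\mbox{iso}\,\sigma_{sbf}(T+K)\cup\Pi(T+K)$.

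I do not foresee a genuine obstacle here. The only points that need care are (a) reading off from Theorem~\ref{theorem11} that its right-hand side equals $\sigma_{sbf}(T+K)\cup\Pi(T+K)$, which uses only the inclusion $\sigma_{qf}\subseteq\sigma_{sbf}$, and (b) invoking the step ``$\rho_{qf}(T)$ connected $\Rightarrow$ Drazin invertibility throughout $\rho_{qf}(T)$'' without re-deriving it. As an alternative to (b), one may instead note that $\sigma_{qf}(T)=\emptyset$ forces $\sigma_{\Gamma}(T)=\emptyset$ and apply \cite[Proposition 2]{4} (recalled just before Lemma~\ref{lemma2}) to obtain $\mathbb{C}=\rho_{\Gamma}(T)=\rho(T)\cup\Pi(T)$, after which the argument proceeds identically.
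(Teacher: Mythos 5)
Your proposal is correct and follows essentially the same route as the paper: apply Theorem~\ref{theorem11} (valid since $\rho_{qf}(T)=\mathbb{C}$ is connected), then reduce to finiteness of $\sigma_{sbf}(T+K)$ via $\sigma(T)=\Pi(T)\subseteq\mbox{iso}\,\sigma(T)$ and the invariance of $\sigma_{sf}$ under compact perturbation. The only (harmless, arguably cleaner) deviation is that you eliminate the third term of Theorem~\ref{theorem11} by the set-theoretic collapse $\sigma_{qf}\cup(\sigma_{sbf}\setminus\sigma_{qf})=\sigma_{sbf}$, whereas the paper first argues that $\rho_{qf}(T+K)$ is connected so that $\rho_{qf}(T+K)=\rho_{sbf}(T+K)$.
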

\begin{proof}
Since $\sigma_{qf}(T)=\emptyset$, $\rho_{qf}(T)=\mathbb{C}$ which implies that $\sigma(T)=\Pi(T)$. As $\mbox{int} \thinspace \sigma_{qf}(T)= \emptyset$, by Theorem \ref{theorem9} $\rho_{qf}(T+K)$ is connected. This gives $\rho_{qf}(T+K)=\rho_{sbf}(T+K)$. Therefore, by Theorem \ref{theorem11} $\sigma(T+K)=\sigma_{qf}(T+K) \cup \Pi(T+K).$ As $\sigma(T)= \Pi(T)$,  $\sigma(T)$ is finite which implies that $\sigma_{sf}(T)=\sigma_{sf}(T+K)$ is finite. This gives $$\sigma_{qf}(T+K)= \sigma_{sbf}(T+K)=\mbox{iso} \thinspace \sigma_{sbf}(T+K).$$ Therefore,  $\sigma(T+K)=\mbox{iso} \thinspace  \sigma_{sbf}(T+K) \cup \Pi(T+K).$
\end{proof}

\textbf{Acknowledgement} The corresponding author (Ankit Kumar) is supported by Department of Science and Technology, New Delhi, India (Grant No. DST/INSPIRE Fellowship/[IF170390]).

\textbf{Anuradha Gupta}\\
 Department of Mathematics, Delhi College of Arts and Commerce,\\
  University of Delhi, Netaji Nagar, \\
  New Delhi-110023, India.\\
  \vspace{0.2cm}
  email: dishna2@yahoo.in
  
  \textbf{Ankit Kumar$^*$}\\
  Department of Mathematics, University of Delhi, \\
  New Delhi-110007, India.\\
  email: 1995ankit13@gmail.com

\begin{thebibliography}{99}
\bibitem{8} P. Aiena, {\it Fredholm and local spectral theory, with applications to multipliers}, Kluwer Academic Publishers, Dordrecht, 2004. 

\bibitem{1} P. Aiena, {\it Fredholm and local spectral theory II}, Lecture Notes in Mathematics, 2235, Springer, Cham, 2018.

\bibitem{9} P. Aiena\ and\ S. Triolo, Weyl-type theorems on Banach spaces under compact perturbations, Mediterr. J. Math. {\bf 15} (2018), no.~3, Art. 126, 18 pp.

\bibitem{2} M. Berkani, \textit{On a class of quasi-Fredholm operators}, Integral Equations Operator Theory {\bf 34} (1999), no.~2, 244--249. 

\bibitem{3} M. Berkani\ and\ H. Zariouh, \textit{B-Fredholm spectra and Riesz perturbations}, Mat. Vesnik {\bf 67} (2015), no.~3, 155--165. 

\bibitem{10} B. P. Duggal\ and\ I. H. Kim, Generalized Browder, Weyl spectra and the polaroid property under compact perturbations, J. Korean Math. Soc. {\bf 54} (2017), no.~1, 281--302.

\bibitem{11} B. Jia\ and\ Y. Feng, Weyl type theorems under compact perturbations, Mediterr. J. Math. {\bf 15} (2018), no.~1, Art. 3, 13 pp.


\bibitem{4} W. Shi, \textit{Topological uniform descent and compact perturbations}, Rev. R. Acad. Cienc. Exactas F\'{\i}s. Nat. Ser. A Mat. RACSAM {\bf 113} (2019), no.~3, 2221--2233.

\bibitem{5} Q. Zeng, H. Zhong\ and\ Q. Jiang, \textit{Localized SVEP and the components of quasi-Fredholm resolvent set}, Glas. Mat. Ser. III {\bf 50(70)} (2015), no.~2, 429--440.

\bibitem{6} S. Zhu\ and\ C. G. Li, \textit{SVEP and compact perturbations}, J. Math. Anal. Appl. {\bf 380} (2011), no.~1, 69--75.

\bibitem{7} S. \v{Z}ivkovi{\'{c}}-Zlatanovi{\'{c}} 
and M. Berkani, \textit{Topological Uniform Descent, Quasi-Fredholmness and Operators Originated from Semi-B-Fredholm Theory}, Complex Analysis and Operator Theory, doi: 10.1007/s11785-019-00920-3.
\end{thebibliography}
\end{document}